\documentclass[preprint,12pt]{elsarticle}
\usepackage{amssymb}
\usepackage{amsmath}
\usepackage{amsthm}
\usepackage{bm}
\usepackage{algorithm}
\usepackage{algorithmic}
\usepackage{cases}
\usepackage{graphicx}
\usepackage{subcaption}
\usepackage{color}
\usepackage[colorlinks, citecolor=black]{hyperref}

\newcommand{\xkh}[1]{\left(#1\right)}
\newcommand{\dkh}[1]{\left\{#1\right\}}

\newcommand{\nj}[1]{\left \langle {#1} \right \rangle}

\newcommand{\norms}[1]{\left\|{#1}\right\|}
\newcommand{\abs}[1]{\lvert#1\rvert}
            
 \newcommand{\T}{\top}           
\newcommand{\dist}{{\rm dist}}

\newcommand{\va}{{\bm a}}
\newcommand{\vx}{{\bm x}}
\newcommand{\vy}{{\bm y}}

\newcommand{\vz}{{\bm z}}
\newcommand{\ve}{{\bm e}}
\newcommand{\R}{{\mathbb R}}

\newtheorem{theorem}{Theorem}[section]

\newtheorem{lemma}{Lemma}[section]
\newtheorem{remark}{Remark}[section]

\newtheorem{fact}{Fact}
\begin{document}
\begin{frontmatter}
\title{Convergence Analysis of Reshaped Wirtinger Flow with Random Initialization for Phase Retrieval}

\author[th]{Linbin Li}
\ead{linbinli@buaa.edu.cn}

\author[th]{Haiyang Peng}
\ead{haiyangpeng@buaa.edu.cn}

\author[th]{Yong Xia}
\ead{yxia@buaa.edu.cn}

\author[th]{Meng Huang}
\ead{menghuang@buaa.edu.cn}

\affiliation[th]{organization={School of Mathematics and Systems Science, Beihang University},
            addressline={Xueyuan Road 37},
            city={Beijing},
            postcode={100191},
            country={China}}

\begin{keyword}
Phase retrieval \sep  Random initialization \sep Reshaped Wirtinger Flow
\end{keyword}
\begin{abstract}
This paper investigates phase retrieval using the Reshaped Wirtinger Flow (RWF) algorithm, focusing on recovering target vector $\vx \in \R^n$ from magnitude measurements 
\(
y_i = \left| \langle \va_i, \vx \rangle \right|, \; i = 1, \ldots, m,
\)
under random initialization, where $\va_i \in \R^n$ are measurement vectors.
For Gaussian measurement designs, we prove that when $m\ge O(n \log^2 n\log^3 m)$,  the RWF algorithm with random initialization achieves $\epsilon$-accuracy within
\(
O\big(\log n + \log(1/\epsilon)\big)
\)
iterations, thereby attaining nearly optimal sample and computational complexities comparable to those previously established for spectrally initialized methods. Numerical experiments demonstrate that the convergence rate is robust to initialization randomness and remains stable even with larger step sizes.

\end{abstract}
\end{frontmatter}
\section{Introduction}
\label{sec:introduction}
\subsection{Background and motivation}
The phase retrieval problem aims to recover a signal $\vx \in \mathbb{R}^n$ from magnitude-only measurements:
\begin{equation}
y_i = \left| \langle \va_i, \vx \rangle \right|, \quad i = 1, \ldots, m,
\label{problem}
\end{equation}
where $\vy =(y_1,\ldots,y_m)^\T$ denotes the observed measurement vector, and $\va \in \mathbb{R}^n$, for $i = 1, \ldots, m$, are known measurement vectors. The phase retrieval problem has long been recognized as a foundational challenge with deep historical roots in engineering, particularly in optical imaging and diffraction analysis. The phase recovery problem was first proposed by Sayre \cite{sayre1952some} and colleagues in 1952, playing a pivotal role in the discovery of the DNA double-helix structure. Shortly thereafter, in 1953, mathematician H. Hauptman \cite{karle1953solution} and collaborators developed an algorithm to address phase retrieval in crystallography, successfully applying it to elucidate molecular structures in crystalline materials. In 1978, Fienup \cite{fienup1978reconstruction} introduced an algorithm for phase retrieval from the Fourier modulus of two-dimensional images, incorporating constraints such as non-negativity and known image support. By 1984, it was well documented \cite{rosenblatt1984phase} that phase retrieval arises naturally in experiments involving diffracted electromagnetic radiation to determine the internal structure of objects. Around the turn of the millennium, renewed interest in phase retrieval emerged alongside advances in optical imaging techniques \cite{miao1999extending}. Today, phase retrieval enjoys broad applications across diverse fields, including optical imaging \cite{millane1990phase}, image processing \cite{holloway2016toward}, microscopy \cite{candes2015phase}, astronomical observation \cite{fienup1987phase}, X-ray crystallography \cite{miao1999extending}, microwave communications \cite{chen2020dual}, and quantum mechanics \cite{corbett2006pauli}.

A variety of algorithms have been developed to tackle the phase retrieval problem. Effective convex optimization methods for phase retrieval only emerged in the 21st century. Prior to that, the dominant approach was the simple yet efficient alternating projection method. The earliest alternating projection algorithm, known as the Gerchberg-Saxton (GS) algorithm, was proposed by Gerchberg and Saxton in 1972 \cite{gerchberg1994practical}. Building on this, Fienup and colleagues introduced two notable variants: the Basic Input-Output (BIO) algorithm \cite{fienup1978reconstruction} and the Hybrid Input-Output (HIO) algorithm \cite{fienup1982phase}. Subsequently, several new projection-based algorithms have been proposed, among which the Hybrid Projection-Reflection (HPR) algorithm \cite{bauschke2003hybrid} and the Relaxed Averaged Alternating Reflections (RAAR) algorithm \cite{luke2004relaxed}, both developed by Luke and collaborators, stand out. On the convex optimization front, prominent algorithms include PhaseLift \cite{candes2013phaselift}, PhaseCut \cite{waldspurger2015phase}, and others.

Recently, Cand$\Grave{\text{e}}$s et al.\ \cite{chen2015solving} introduces the Wirtinger Flow (WF) algorithm and proves signal recovery via the gradient algorithm with only ${O}(n\log n)$ Gaussian measurements, attaining $\epsilon$-accuracy within ${O}(mn^2\log\frac{1}{\epsilon})$ flops. The idea of the WF algorithm is to solve the following nonconvex least squares estimation problem (\ref{WF}) by vanilla gradient descent:
\begin{equation}
\min_{\vz\in\mathbb{R}^n} f(\vz)=\frac{1}{4m}\sum_{i=1}^m \left[(\va_i^{\T}\vz)^2-y_i^2\right]^2.
\label{WF}
\end{equation}
The Wirtinger Flow (WF) algorithm is further refined by the introduction of the Truncated Wirtinger Flow (TWF) algorithm, as proposed in \cite{chen2015solving}. Under Gaussian designs, the theoretical justification for the WF algorithm with random initialization is provided in \cite{chen2019gradient}. More recently, Zhang et al.\ \cite{zhang2016reshaped} introduced a nonsmooth yet computationally tractable loss function, developing the Reshaped Wirtinger Flow (RWF) algorithm with gradient-based updates. The RWF algorithm has been proven to achieve geometric convergence to a global optimum under carefully designed initialization for random Gaussian measurements, provided that the number of measurements $m$ is on the order of ${O}(n)$. The core idea of RWF is to solve the following problem (\ref{reshaped-WF}) by adopting the loss function $f(\vz)$:
\begin{equation}
    \min_{\vz \in \mathbb{R}^n} f(\vz) = \frac{1}{2m} \sum_{i=1}^m \left( \left| \va_i^{\T} \vz \right| - y_i \right)^2.
    \label{reshaped-WF}
\end{equation}

This paper considers the nonconvex and nonsmooth optimization problem (\ref{reshaped-WF}) and develops a gradient-like algorithm defined by
\begin{equation}
    \vz_{k+1} = \vz_k - \mu \nabla f(\vz_k).
    \label{gradient update}
\end{equation}
Here, $\nabla f(\vz)$ corresponds to the gradient of $f(\vz)$ when $\va_i^{\T} \vz_k \neq 0$ for all $i = 1, \ldots, m$. For samples at nonsmooth points where $\va_i^{\T} \vz_k = 0$, we utilize the Fréchet superdifferential \cite{kruger2003frechet} for nonconvex functions and set the corresponding gradient component to zero, since zero is an element of the Fréchet superdifferential. For simplicity, and with a slight abuse of terminology, we continue to refer to $\nabla f(\vz)$ as the ‘gradient’, which accurately characterizes the update direction in the gradient descent loop. Let $\sigma(a)$ denote the sign function, with the special case $\sigma(0) = 0$. The update direction is defined as follows:
\begin{equation}
    \nabla f(\vz) = \frac{1}{m} \sum_{i=1}^m \left( \left| \va_i^{\T} \vz \right| - y_i \right) \sigma(\va_i^{\T} \vz) \va_i.
    \label{define of gradient}
\end{equation}

The remarkable effectiveness of spectral initialization raises an intriguing question: is a carefully crafted initialization essential for achieving rapid convergence? It is clear that vanilla gradient descent cannot start from arbitrary points, as this may lead to entrapment in undesirable stationary points such as saddle points. Nevertheless, could there exist a simpler initialization strategy that avoids these stationary points while achieving comparable performance to spectral initialization? Motivated by this natural question, a strategy frequently favored by practitioners is to start from random initialization. The advantages of this approach are evident: unlike spectral methods, random initialization is model-agnostic and generally more robust to model mismatches. This important issue has attracted widespread attention.

However, the theoretical understanding of the RWF algorithm with random initialization remains limited. The objective of this paper is to analyze the RWF algorithm under random initialization using randomly resampled Gaussian measurements, providing rigorous theoretical support for this important and widely recognized problem. A key challenge arises from the presence of the sign function, which causes $\nabla f(\vz)$ to be discontinuous and not Lipschitz continuous near the origin.

\subsection{Our contributions}
In this paper, we prove that when measurement vectors $\{\va_i\}$ are i.i.d Gaussian random vectors, then RWF with random initialization converges to the target signal $\vx$ in ${O}(\log n + \log \frac{1}{\epsilon})$ iterations to $\epsilon$-accuracy. We fill the theoretical gaps of the RWF algorithm under random initialization, making the algorithm more practical and effective, and better meeting the requirements of real world problems. Unlike prior significant works \cite{zhang2016reshaped} where RWF relies on spectral initialization (computational complexity ${O}(n^3)$), this paper proves that under random initialization, the RWF algorithm can achieve the same effect as spectral initialization with ${O}(\log n)$ iterations, and its computational complexity is ${O}(n^2 \log n)$. Experiments also demonstrate the convenience and efficiency of random initialization.

In our analysis, we rigorously prove that the RWF convergence process admits a phase separation into two distinct regimes. In the first phase, we reveal that starting from a random initialization that is near orthogonal to the target signal, the angle between the RWF iteration and the true solution is monotonically decreasing. Moreover, our findings characterize the sophisticated variations of the signal component $\langle \vz_k,\vx\rangle \vx$ and the orthogonal component $\vz_k-\langle \vz_k,\vx\rangle \vx$ throughout this phase. We prove that this phase persists for ${O}(\log n)$ iterations with the sample complexity of order ${O}(n \log^2 n \log^{3} m)$. These theoretical derivations are subsequently verified through experiments. The second phase initiates when the distant between iteration and ground truth satisfies $ \dist(\vz,\vx)\leq \gamma$, where $\gamma
$ is a constant that can be set to 0.1 in experiments. During this phase, the iterates exhibit linear convergence to the ground truth, which can be deduced from the previous work \cite{zhang2016reshaped}.

The key to proving convergence of randomly initialized RWF lies in demonstrating that after the Phase 1, the iterative sequence enters a sufficiently small neighborhood of the true signal. The core difficulty stems from the discontinuity and the lack of Lipschitz continuity near the origin induced by the sign function and the absolute value function. During Phase 1, we implement a resampling strategy for the measurement vector $\{\va_i\}$ that enforces conditional independence between the iterate sequence $\vz_k$ and $\{\va_i\}$. This decoupling enables us to establish with high probability that the randomly initialized RWF algorithm avoids saddle-point trapping throughout Phase 1.

\subsection{Related work}
In the pursuit of developing nonconvex algorithms with guaranteed global performance for the phase retrieval problem, Netrapalli et al.\ \cite{netrapalli2013phase} proposed an alternating minimization algorithm, while Candès et al.\ \cite{candes2015phase}, Chen and Candès \cite{chen2015solving}, Zhang et al.\ \cite{zhang2016provable}, and Cai et al.\ \cite{cai2016optimal} focused on first-order gradient-based methods. A recent study by Sun et al.\ \cite{sun2018geometric} examined the geometric structure of the nonconvex objective and introduced a second-order trust-region algorithm. Additionally, Wei \cite{wei2015solving} empirically demonstrated the rapid convergence of a Kaczmarz stochastic algorithm.

This paper is most closely related to \cite{zhang2016reshaped} and \cite{chen2019gradient}. Unlike \cite{zhang2016reshaped}, we extend the RWF algorithm to random initialization, enhancing its applicability in practical engineering contexts. While we draw on the analysis of Lemma 8 in \cite{chen2019gradient}, our approach differs by adopting a distinct strategy to overcome the significant challenges introduced by the absolute value and sign functions. Nevertheless, we provide a more intuitive and accessible framework for understanding and analyzing the problem.

\subsection{Notations and outline}
We use boldface lowercase letters such as $\va$ to denote vectors, and lowercase letters such as $a$ to denote scalars. The notation $[\va]_i$ denotes the $i$-th component of the vector $\va$. For $a \in \mathbb{R}$, $\sigma(a)$ denotes the sign of $a$, with the special case $\sigma(0) = 0$. The identity matrix is denoted by $\mathbf{I}_n$, and $\mathcal{S}^{n-1}$ denotes the Euclidean sphere in $\mathbb{R}^n$. The vector $\ve_i$ denotes the standard basis vector whose $i^{\text{th}}$ component is 1 and all other components are 0. We adopt the convention that $\|\cdot\Vert$ denotes the $\|\cdot\Vert_2$ unless otherwise specified. The Euclidean distance between two vectors, accounting for a possible global sign difference, is defined as $ \dist(\vz,\vx)=\min\left\{\|\vz-\vx\Vert,\|\vz+\vx\Vert\right\}$. We use the notation $\gtrsim$ (respectively, $\lesssim$) to indicate the following: if $x \gtrsim y$ (resp. $x \lesssim y$), then there exists a constant $c > 0$ such that $x \geq c y$ (resp. $x \leq c y$).

\subsection{Organization}
The paper is organized as follows. Section 2 introduces the proposed algorithm and presents the main theoretical contributions. Section 3 provides the proofs of the main results. Section 4 presents numerical experiments that validate our theoretical findings, accompanied by detailed discussions. Section 5 presents the conclusions and discussion. Section 6 covers all necessary preliminaries and foundational concepts.  Finally, Sections 7 and 8 contain the full technical proofs supporting all major claims made in this paper.
\section{Algorithm and main results}
\label{sec:Algorithm and main results}

\begin{algorithm}[!ht]
\renewcommand{\algorithmicrequire}{ \textbf{Input:}}
\renewcommand{\algorithmicensure}{\textbf{Output:}}
\caption{Resampled Reshaped Wirtinger Flow with random initialization}
\label{algorithm}
\begin{algorithmic}[0]
\REQUIRE Data $\dkh{\va_i,y_i}_{i=1}^{\tilde{m}K},i=1,\ldots, m$ equally partitioned into $K$ disjoint blocks $\dkh{\va_i,y_i}_{i \in \mathcal I_k}, k=1,\ldots, K$; random initialization $\vz_0 \sim \mathcal N({\bm 0}, n^{-1}{\bm I}_n)$; constant step size $\mu\in(0,0.5]$ (set to 0.5 in experiments); the maximum number of iterations $T$.
\FORALL{$k=0,1,\ldots,T$}
\STATE 
\[
 \vz_{k+1}=\vz_{k}-\frac{\mu}{\tilde{m}}\sum_{i \in \mathcal I_{t}}(| \va_i^{\T}\vz_k\vert-y_i)\sigma(\va_i^{\T}\vz_k)\va_i,
 \]
 where $t=\mbox{mod}(k,K)$.
\ENDFOR 
\ENSURE $\vz_T$.
\end{algorithmic}
\end{algorithm}

In this work, we consider the algorithm in a batch setting. We partition the sampling vectors $\va_i$ and their corresponding observations $y_i$ into $K$ disjoint blocks $\dkh{\va_i,y_i}_{i \in \mathcal I_k}$, for $k = 1, \ldots, K$, each of roughly equal size, and perform gradient descent cyclically over these blocks. We set the sample size per block as $\tilde{m}$, namely $m=\tilde{m}K$. The algorithm is summarized in Algorithm \ref{algorithm}. Unlike the RWF algorithm in \cite{zhang2016reshaped}, which is using spectral initialization, our approach employs random initialization followed by resampling during the first $T_{\gamma}$ steps. After these $T_{\gamma}$ steps, the iterate $\vz_{T_{\gamma}}$ satisfies the initialization conditions required by the RWF algorithm, enabling linear convergence in subsequent iterations, analogous to the original RWF method. It is worth noting that the total number of samples $m$ satisfies $m\geq {O}({n\log^2 n\log^3 m})$.

\begin{theorem}
Fixed $\vx\in\mathbb{R}^{n}$. Suppose  that $K=C_0 \log n$ for some constant $C_0>0$ and  $\va_i \sim \mathcal N({\bm 0}, {\bm I}_n), i=1,\ldots,\tilde{m}K$ are i.i.d. Gaussian random vectors and $\mu_t\equiv \mu=c/ \|\vx\Vert$ for some  sufficiently small constant $c>0$. Assume that the random initialization $\vz_0$ is independent of $\{\va_i\}$ and obeys
    \begin{equation} \label{initial}
        \frac{|\langle\vz_0,\vx\rangle\vert}{\|\vx\Vert} \geq \frac{1}{2\sqrt{n\log n}} \quad and \quad (1-\frac{1}{\log n})\|\vx\Vert\leq\|\vz_0\Vert\leq (1+\frac{1}{\log n})\|\vx\Vert.
    \end{equation}
    Then  there exist a sufficiently small absolute constant $0<\gamma<1$ and $T_\gamma \lesssim \log n$ such that with probability at least $1-O(\exp\xkh{-c_1n})-O(\tilde{m}^{-10})$, the RWF algorithm obeys
    \begin{equation*}
        \dist(\vz_k,\vx)\leq \gamma(1-\rho)^{k-T_{\gamma}}\|\vx\Vert, \quad \forall k \geq T_{\gamma}
    \end{equation*}
    for some universal constant $0<\rho<1$, provided $\tilde{m}\geq Cn\log{n}\log^3{\tilde{m}}$ for some sufficiently large constant $C>0$.  Here,  $c_1$ is an absolute constant.
\label{main theorem} 
\end{theorem}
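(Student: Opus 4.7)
The plan is to decompose every iterate as $\vz_k = \alpha_k\,\vx/\|\vx\| + \vw_k$ with $\vw_k \perp \vx$ and track the pair $(\alpha_k,\|\vw_k\|)$, then split the analysis into two phases. In Phase 1 ($k \le T_\gamma$), I would show that starting from $|\alpha_0|/\|\vw_0\| \gtrsim 1/\sqrt{n\log n}$, this ratio grows to a constant in $O(\log n)$ iterations, so that $\dist(\vz_{T_\gamma},\vx) \le \gamma$. In Phase 2 ($k \ge T_\gamma$), the contraction $\dist(\vz_{k+1},\vx) \le (1-\rho)\dist(\vz_k,\vx)$ follows directly from the local linear convergence of RWF established by Zhang et al.\ \cite{zhang2016reshaped}, since by that point $\vz_{T_\gamma}$ satisfies their initialization hypothesis.

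\textbf{Population dynamics.} By rotational invariance of Gaussian $\va_i$, the population gradient $\mathbb{E}[\nabla f(\vz)]$ can be written in closed form as an affine combination of $\vz$, $\vx/\|\vx\|$, and the signal/orthogonal magnitudes. Projecting the iteration onto the signal and orthogonal directions yields scalar recursions of the form
\begin{align*}
\alpha_{k+1} &\approx \bigl(1 - \mu\, h_1(\alpha_k,\|\vw_k\|)\bigr)\alpha_k + \mu\, g(\alpha_k,\|\vw_k\|)\|\vx\|, \\
\|\vw_{k+1}\| &\approx \bigl(1 - \mu\, h_2(\alpha_k,\|\vw_k\|)\bigr)\|\vw_k\|,
\end{align*}
with $h_1 < h_2$ in the near-orthogonal regime $|\alpha_k|\ll\|\vw_k\|$. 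A careful analysis of this deterministic dynamical system, following the template of \cite{chen2019gradient} for the smooth WF loss, shows that $|\alpha_k|$ grows by a constant factor per iteration while $\|\vw_k\|$ stays in an $O(1/\log n)$ neighborhood of $\|\vx\|$, so Phase 1 terminates in $T_\gamma \lesssim \log n$ steps.

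\textbf{Bridging population and empirical dynamics.} The key technical challenge is porting this deterministic analysis to the empirical iterates despite the discontinuity of $\sigma(\cdot)$ and the lack of Lipschitz continuity of $\nabla f$ near the hyperplanes $\{\va_i^\T\vz=0\}$. Here I would exploit the resampling structure of Algorithm \ref{algorithm}: since each block of $\tilde m$ samples is used only once and is independent of the current iterate, concentration arguments reduce to pointwise deviations rather than uniform bounds over a net of all possible $\vz$. For each block I would split $\nabla f(\vz_k) - \mathbb{E}[\nabla f(\vz_k)]$ into (i) a smooth part on the event that $|\va_i^\T\vz_k|$ is bounded away from $0$, handled by standard sub-exponential concentration and matrix Bernstein, and (ii) a small residual from indices with $|\va_i^\T\vz_k|$ tiny, controlled via anti-concentration of Gaussian inner products and truncation arguments. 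The resulting per-iteration error is $o(1/\log n)$ with probability $1 - O(e^{-c_1 n}) - O(\tilde m^{-10})$, provided $\tilde m \gtrsim n\log n\log^3 \tilde m$; a union bound over the $K = C_0\log n$ blocks yields the full Phase 1 guarantee.

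\textbf{Main obstacle.} The hardest step will be step (ii): uniformly controlling the contribution of samples with very small $|\va_i^\T\vz_k|$ throughout Phase 1. Because $\vz_k$ is near-orthogonal to $\vx$ and $|\alpha_k|$ may be as small as $1/\sqrt{n\log n}$, the argument of the sign function can be arbitrarily close to zero and naive Lipschitz-covering bounds blow up. Resampling removes the dependence between $\vz_k$ and $\{\va_i\}_{i\in\mathcal I_t}$, but one still needs sharp anti-concentration and tail bounds for the nonsmooth functional $(|\va^\T\vz|-|\va^\T\vx|)\sigma(\va^\T\vz)\va$ on the correct scale, and these must be strong enough that the surviving noise does not overwhelm the modest per-step growth of $\alpha_k$ that drives Phase 1. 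Handling this carefully, adapted from the approach of Lemma 8 in \cite{chen2019gradient} but with a new decomposition suited to the sign/absolute-value structure of RWF, is the core novelty required to complete the proof.
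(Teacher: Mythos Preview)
Your overall architecture matches the paper: decompose $\vz_k$ into signal and orthogonal parts $(\alpha_k,\beta_k)$, derive approximate scalar recursions for the pair, show the signal-to-orthogonal ratio grows geometrically over $O(\log n)$ Phase-1 iterations, then invoke \cite{zhang2016reshaped} for Phase 2. The paper packages the ratio growth as the clean inequality $\tan\omega_{k+1}\ge(1+\tfrac14\mu)\tan\omega_k$ with $\omega_k=\arctan(\alpha_k/\beta_k)$, which you may find a tidier organizing device than tracking $h_1<h_2$.

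Where you diverge is in the concentration step and in your diagnosis of the main obstacle. You propose to split the gradient error into a ``smooth part'' where $|\va_i^\top\vz_k|$ is bounded away from zero and a residual handled by anti-concentration near $\{\va_i^\top\vz_k=0\}$. The paper does neither. Once resampling makes $\vz_k$ independent of the current block, the event $\va_i^\top\vz_k=0$ has measure zero and $\sigma(\va_i^\top\vz_k)|\va_i^\top\vx|\,\va_i$ is just a sub-exponential vector with an explicitly computed mean (Lemma~\ref{expectation}). The deviation $r(\vz_k)=\nabla F(\vz_k)-\nabla f(\vz_k)$ is then decomposed into four pieces $J_1,\ldots,J_4$ (signal/orthogonal crossed with linear/absolute-value parts), and each is bounded by a direct application of Bernstein or its user-friendly variant---no truncation, no anti-concentration, no case split on the size of $|\va_i^\top\vz_k|$. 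Your route may well go through, but it is considerably more labor than necessary.

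The genuine delicacy, which your write-up underplays, is not the nonsmoothness but the \emph{scale} of the noise in the signal coordinate: to preserve the geometric growth of $\alpha_k$ one needs $|r_1(\vz_k)|\lesssim|\alpha_k|/\log\tilde m$, not merely $o(1)$. The paper gets this by pairing a concentration bound of order $\sqrt{\log\tilde m/\tilde m}$ with the a priori lower bound $|\alpha_k|\ge 1/(2\sqrt{n\log n})$ from the initialization, and this is exactly what drives the sample requirement $\tilde m\gtrsim n\log n\log^3\tilde m$. Relatedly, your statement that $\|\vw_k\|$ stays in an $O(1/\log n)$ neighborhood of $\|\vx\|$ is not what occurs: $\beta_k$ first drops to about $3/4$, then lives in $[\tfrac13,\tfrac34]$ while $\alpha_k$ climbs to a constant, and finally descends below $\gamma/2$; maintaining the induction hypothesis $c_l\le\beta_k\le\|\vz_k\|\le 2$ across these sub-stages (the paper's Facts 1--3) is part of the work needed to close the argument.
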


\begin{remark}
As shown in \cite{chen2019gradient}, random initialization $\vz_0 \sim \mathcal N({\bm 0}, n^{-1} \norms{\vx}^2 {\bm I}_n)$ obeys the condition \eqref{initial} with probability at least $1-O(1/\sqrt{\log n})$.
\end{remark}

\section{Proof of main result}

 Due to the rotational invariance of Gaussian distributions, throughout the analysis, we assume without loss of generality that 
\begin{equation*}
    \vx=\ve_1 \quad \mbox{and} \quad \vz_0^{||}=[\vz_0]_1>0.
    \label{without loss of generality}
\end{equation*}
For each $\vz_k$, we decompose 
\begin{equation*}
\vz_k= \vz_k^{||} \ve_1 +\vz_k^{\perp},
    \label{def:zk_||}
\end{equation*}
where $\vz_k^{||}=\nj{\vz_k,\ve_1}$. Furthermore, we define
\begin{equation*}
    \alpha_k=\vz_k^{||},  \quad  \beta_k=\|\vz_k^{\perp}\Vert \quad \mbox{and} \quad  r_k = \norms{\vz_k}. 
    \label{def:alpha beta}
\end{equation*}
To prove that $\vz_k$ converges to $\vx$, we just need show that $\alpha_k$ converges to $1$ and $\beta_k$ converges to $0$.  
To this end, observing that if  there exists some $T_{\gamma} \in \mathbb N_{+}$ such that 
\begin{equation}   \label{eq:localrg}
    |\alpha_{T_{\gamma}}-1\vert\leq\frac{1}{2}\gamma \quad \mbox{and} \quad \beta_{T_{\gamma}}\leq\frac{1}{2}\gamma
\end{equation}
for some sufficiently small constant $\gamma>0$, 
then 
\begin{equation*} 
\dist(\vz_{T_{\gamma}},\vx)\leq \|\vz_{T_{\gamma}}-\vx\Vert\leq|\alpha_k-1\vert+\beta_k\leq\gamma.
\end{equation*}
It means $\vz_{T_{\gamma}}$ is very close to the target solution $\vx$. Therefore, after $k \ge T_\gamma$,  we can invoke prior theory \cite{zhang2016reshaped} to show that there exists constant $0<\rho<1$ such that with probability at least $1-{O}(\exp(-c_1m))$,
\begin{equation} \label{eq:phase2}
    \dist(\vz_k,\vx)\leq (1-\rho)^{k-T_{\gamma}} \norms{\vz_{T_\gamma}-\vx},\quad \forall k\geq T_{\gamma}.
\end{equation}
To complete the proof, we only need to show that  \eqref{eq:localrg} holds for some $T_{\gamma}={O}(\log n)$. 
Our proof will be inductive in nature.  Specifically, we will first identify a induction hypothesis that 
\begin{equation}\label{induction hypothesis}
    c_{l}\leq\|\vz_k^{\perp}\Vert\leq \|\vz_k\Vert\leq 2
\end{equation}
for a universal constant $c_l >0$, and then we prove the approximate state evolution:
\begin{subequations}\label{alpha beta approximate}
\begin{align}
	\label{alpha beta approximate:a} \alpha_{k+1} &= \left[1- \mu\left(1- \frac{2}{\pi} \cdot \frac{\beta_k}{\alpha_k^2+\beta_k^2}+\zeta_k\right)\right]\alpha_k+\frac{2\mu}{\pi} \arcsin{\frac{\alpha_k}{\sqrt{\alpha_k^2+\beta_k^2}}}, \\
	\label{alpha beta approximate:b} \beta_{k+1} &= \left[1- \mu\left(1-\frac{2}{\pi} \cdot \frac{\beta_k}{\alpha_k^2+\beta_k^2}+\rho_k\right)\right]\beta_k,
\end{align}
\end{subequations}
where $|\zeta_k\vert\leq \frac{c}{\log \tilde{m}}$ and $|\rho_k\vert\leq \frac{c}{\log \tilde{m}}$ for some universal constant $c>0$. Based on state evolution \eqref{alpha beta approximate},  the inequality  \eqref{eq:localrg} will hold for some $T_{\gamma}={O}(\log n)$, which gives the conclusion of Theorem \ref{main theorem}. Finally, we proceed by establishing the  hypothesis \eqref{induction hypothesis} via induction.

The next lemma shows that if \eqref{induction hypothesis}  hold for the $k$-th iteration, then $\alpha_{k+1}$ and $\beta_{k+1}$ follow the approximate  state evolution \eqref{alpha beta approximate}.
\begin{lemma}\label{lemma:bound}
\setlength{\belowdisplayskip}{1pt}
    Suppose $\tilde{m}\geq Cn \log n\log^3{\tilde{m}}$ for some sufficiently large constant $C>0$. Assume that  $\va_i\overset{i.i.d}{\thicksim}\mathcal{N}({\bm 0},{\bm I}_{n}),i=1,\ldots,\tilde{m}$ are Gaussian random vectors. For any $0 \le k\le T$ with $T\lesssim\log n$, if the $k$th iterates  satisfy the induction hypotheses (\ref{induction hypothesis}), then with probability at least $1-{O}(\exp({-\hat{c}n}))-{O}(\tilde{m}^{-20})$, 
\begin{align*}
	\alpha_{k+1} &= \left[1- \mu\left(1- \frac{2}{\pi} \cdot \frac{\beta_k}{\alpha_k^2+\beta_k^2}+\zeta_k\right)\right]\alpha_k+\frac{2\mu}{\pi} \arcsin{\frac{\alpha_k}{\sqrt{\alpha_k^2+\beta_k^2}}}, \\
	\beta_{k+1} &= \left[1- \mu\left(1-\frac{2}{\pi} \cdot \frac{\beta_k}{\alpha_k^2+\beta_k^2}+\rho_k\right)\right]\beta_k,
\end{align*}
hold for some  $|\zeta_k\vert\leq \frac{c}{\log \tilde{m}}$ and $|\rho_k\vert\leq \frac{c}{\log \tilde{m}}$,  where $c>0$ is a sufficiently small constant and $\hat{c}>0$ is a universal constant.  Furthermore, denote $\omega_k:=\arctan\xkh{\alpha_k/\beta_k}$. Then 
\begin{equation}\label{omega} 
    \tan{\omega_{k+1}} \ge (1+\frac{1}{4}\mu)\tan{\omega_k}. 
\end{equation}
\end{lemma}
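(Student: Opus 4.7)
The proof hinges on the resampling structure of Algorithm~\ref{algorithm}: since each iteration uses a fresh block $\{\va_i\}_{i\in\mathcal I_t}$, the iterate $\vz_k$ is independent of $\{\va_i\}_{i\in\mathcal I_t}$. I would condition on $\vz_k$ throughout, treat the block as i.i.d.\ standard Gaussians, and split the analysis into a population-level calculation and an empirical concentration step.

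For the population part, I first rewrite the gradient using $y_i=|[\va_i]_1|$ and $|u|\sigma(u)=u$ as $\nabla f(\vz_k)=\tfrac{1}{\tilde m}\sum_i\va_i\va_i^\T\vz_k-\tfrac{1}{\tilde m}\sum_i|[\va_i]_1|\sigma(\va_i^\T\vz_k)\va_i$. Setting $a_1=[\va]_1$ and $v=\langle\va,\vz_k^\perp\rangle/\beta_k$ gives $\va^\T\vz_k=\alpha_k a_1+\beta_k v$ with $a_1,v$ independent $\mathcal N(0,1)$. By rotational symmetry, $\mathbb E\bigl[|a_1|\sigma(\va^\T\vz_k)\va\bigr]$ lies in $\mathrm{span}(\ve_1,\vz_k^\perp)$, and two one-dimensional Gaussian integrals (using $\mathbb E[\sigma(v+c)v]=2\phi(c)$ and $\mathbb E[\sigma(v+c)]=2\Phi(c)-1$, then integrating in $a_1$) give its components as $\tfrac{2}{\pi}\bigl(\tfrac{\alpha_k\beta_k}{r_k^2}+\arcsin(\alpha_k/r_k)\bigr)$ along $\ve_1$ and $\tfrac{2\beta_k^2}{\pi r_k^2}$ along $\vz_k^\perp/\beta_k$, where $r_k=\sqrt{\alpha_k^2+\beta_k^2}$. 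Combined with $\mathbb E[\va\va^\T]=I_n$, these reproduce exactly the deterministic part of (\ref{alpha beta approximate:a})--(\ref{alpha beta approximate:b}).

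The terms $\zeta_k,\rho_k$ absorb the empirical-vs-population deviations. Each relevant empirical average is, conditional on $\vz_k$, a sum of $\tilde m$ i.i.d.\ scalars with sub-exponential tails; scalar Bernstein controls these at rate $\sqrt{\log\tilde m/\tilde m}$. For the vector identity feeding $\beta_{k+1}=\|\vz_{k+1}^\perp\|$, I would bound the $\vz_k^\perp/\beta_k$-component of $\nabla f(\vz_k)$ by scalar Bernstein and control the piece orthogonal to $\mathrm{span}(\ve_1,\vz_k^\perp)$ via the operator-norm concentration $\|\tfrac{1}{\tilde m}\sum_i\va_i\va_i^\T-I_n\|\lesssim\sqrt{n/\tilde m}$ together with an analogous sub-Gaussian bound for the $|[\va]_1|\sigma(\va^\T\vz_k)\va$ piece. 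Under $\tilde m\ge Cn\log n\log^3\tilde m$ and the induction hypothesis (\ref{induction hypothesis}) keeping $\beta_k\ge c_l$, these translate, after normalizing by $\alpha_k$ and $\beta_k$ respectively, into $|\zeta_k|,|\rho_k|\le c/\log\tilde m$ with the announced probability.

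Finally, dividing (\ref{alpha beta approximate:a}) by (\ref{alpha beta approximate:b}) yields
\[\frac{\tan\omega_{k+1}}{\tan\omega_k}=\frac{1-\mu\bigl(1-\tfrac{2\beta_k}{\pi r_k^2}+\zeta_k\bigr)}{1-\mu\bigl(1-\tfrac{2\beta_k}{\pi r_k^2}+\rho_k\bigr)}+\frac{(2\mu/\pi)\arcsin(\alpha_k/r_k)}{\alpha_k\bigl[1-\mu\bigl(1-\tfrac{2\beta_k}{\pi r_k^2}+\rho_k\bigr)\bigr]}.\]
The first fraction equals $1+O(\mu/\log\tilde m)$; using $\arcsin x\ge x$ for $x\in[0,1]$ and $r_k\le 2$ from (\ref{induction hypothesis}), the second fraction is at least $\mu/\pi$, which comfortably dominates and gives $\tan\omega_{k+1}\ge(1+\tfrac{1}{4}\mu)\tan\omega_k$. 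The main obstacle is the concentration step: the sign function makes $\nabla f$ discontinuous in $\va_i$ near $\{\va:\va^\T\vz_k=0\}$, blocking standard Lipschitz-chaining arguments. Resampling circumvents this because, conditional on $\vz_k$, the discontinuity set has Gaussian measure zero and each summand has bounded sub-exponential Orlicz norm, so classical Bernstein bounds apply with no uniform control over $\vz_k$; the $\log^3\tilde m$ factor in the sample complexity is precisely what absorbs the logarithmic losses from those tail estimates.
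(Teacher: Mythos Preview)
Your proposal is correct and follows essentially the same route as the paper: exploit resampling so that $\vz_k$ is independent of the fresh block, compute the population gradient (the paper packages this as Lemma~6.1, you do the Gaussian integrals directly), then control the empirical deviation via scalar Bernstein along fixed directions and an $\epsilon$-net/operator-norm argument for the remaining vector piece; your derivation of \eqref{omega} by dividing \eqref{alpha beta approximate:a} by \eqref{alpha beta approximate:b} is algebraically equivalent to the paper's manipulation. One point you should make explicit: when you ``normalize by $\alpha_k$'' to obtain $|\zeta_k|\le c/\log\tilde m$, the cross term $\tfrac{1}{\tilde m}\sum_i \va_{i,\perp}^{\T}\vz_k^{\perp}\,a_{i,1}$ concentrates only at rate $\sqrt{\log\tilde m/\tilde m}$, so you need a lower bound $\alpha_k\gtrsim 1/\sqrt{n\log n}$ for the division to yield $c/\log\tilde m$ under $\tilde m\ge Cn\log n\log^3\tilde m$; this bound is not part of the stated hypothesis \eqref{induction hypothesis} but is used (also tacitly) in the paper's proof, coming from the initialization \eqref{initial} together with the inductive monotonicity of $\alpha_k$.
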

\begin{proof}
\setlength{\topsep}{0pt}
See Section \ref{sec:Proofs}.
\end{proof}

We now proceed to demonstrate that the induction hypothesis  (\ref{induction hypothesis}) holds for all $0\leq k \leq T$. 
The base case is readily verified due to the identical initial conditions provided in (\ref{initial}). 
\begin{lemma}\label{bound of r_k}
\setlength{\belowdisplayskip}{0pt}
    Suppose $\tilde{m}\geq Cn \log n\log^3{\tilde{m}}$ for some sufficiently large constant $C>0$. If the induction hypothesis (\ref{induction hypothesis}) hold true up to the $k$-th iteration for some $k\leq T$, then one has
    \begin{equation*}
    c_{l}\leq\|\vz_{k+1}^{\perp}\Vert \leq \|\vz_{k+1}\Vert \leq 2.
    \end{equation*}
\end{lemma}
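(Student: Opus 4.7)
The plan is to apply the one-step state evolution from Lemma~\ref{lemma:bound} under the induction hypothesis and then verify $r_{k+1}:=\|\vz_{k+1}\|\le 2$ and $\beta_{k+1}:=\|\vz_{k+1}^{\perp}\|\ge c_l$ using $r_{k+1}^2=\alpha_{k+1}^2+\beta_{k+1}^2$. Write the multipliers as $B_k:=1-\mu(1-\frac{2\beta_k}{\pi r_k^2}+\rho_k)$ and $A_k=B_k+O(\mu/\log\tilde m)$, and the additive term as $C_k:=\frac{2\mu}{\pi}\arcsin(\alpha_k/r_k)\le \mu$; the evolution then reads $\alpha_{k+1}=A_k\alpha_k+C_k$ and $\beta_{k+1}=B_k\beta_k$.

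For the upper bound $r_{k+1}\le 2$, I would expand
\[
  r_{k+1}^2 \;=\; B_k^2 r_k^2 \;+\; 2 B_k C_k \alpha_k \;+\; C_k^2 \;+\; O\!\bigl(\mu/\log\tilde m\bigr)\cdot r_k^2.
\]
The deterministic portion $B_k^2 r_k^2+2B_kC_k\alpha_k+C_k^2$ mimics the continuous-time update $\dot r^2 = -2r^2 + \frac{4(\beta+\alpha\omega)}{\pi}$, whose right-hand side is bounded above by $\frac{4(1+\pi/2)}{\pi}<4$ throughout Phase~1 and admits a stable equilibrium $r_k^2 \lesssim 1$; this prevents monotone drift of $r_k^2$. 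The remaining $O(\mu/\log\tilde m)\cdot r_k^2$ perturbation accumulates to $O(\mu\log n/\log\tilde m)=O(\mu)$ over $T\lesssim\log n$ steps once $\log\tilde m\gtrsim\log n$, keeping $r_{k+1}^2$ strictly below $4$. (In the degenerate sub-case $\frac{2\beta_k}{\pi r_k^2}>1$, the inequality forces $r_k\le\sqrt{2\beta_k/\pi}<1$, so the bound is trivial.)

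For the lower bound $\beta_{k+1}\ge c_l$, the crude estimate $\beta_{k+1}\ge(1-\mu-\mu|\rho_k|)\beta_k\ge(1-2\mu)\beta_k$ is too weak: iterating over $T\lesssim\log n$ steps gives $(1-2\mu)^T$, which is not a positive constant. The refinement exploits the structure of $B_k$: in the dominant sub-regime $\alpha_k\ll\beta_k$ of Phase~1 one has $\frac{\beta_k}{r_k^2}\approx\frac{1}{\beta_k}$, and so $B_k\approx 1-\mu(1-\frac{2}{\pi\beta_k})$, which is at least $1$ whenever $\beta_k\le 2/\pi$. Hence $\beta_k$ is attracted to $2/\pi$ from above and cannot collapse. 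Combining this with the angle-growth inequality \eqref{omega}, which guarantees $\omega_k$ reaches $\Theta(1)$ (and thus Phase~1 terminates) within $O(\log n)$ iterations before $\beta_k$ can depart substantially from the attractor, lets us choose a small constant $c_l>0$ for which $\beta_k\ge c_l$ persists throughout.

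The principal obstacle is this lower bound on $\beta_{k+1}$: no per-step multiplicative estimate with constant $\ge 1$ holds uniformly across the $(\alpha_k,\beta_k)$-plane, so the argument cannot be a naive induction on $\beta_{k+1}/\beta_k$. Instead one must combine the attractive structure of the $\beta$-dynamics near $2/\pi$ in the near-orthogonal regime with the global horizon bound on Phase~1 supplied by \eqref{omega}, together with the smallness of the perturbations $|\zeta_k|,|\rho_k|\le c/\log\tilde m$, to close the induction.
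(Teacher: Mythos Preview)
Your high-level plan matches the paper's, but both halves need more than you supply, and the lower bound contains a genuine gap.

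For the upper bound, the continuous-time heuristic is not a proof; the claim that the deterministic portion ``mimics'' an ODE with a stable equilibrium does not by itself produce a discrete-time invariant $r_k\le 2$. The paper avoids $r_k^2$ entirely and bounds $\alpha_k$ and $\beta_k$ separately: from \eqref{alpha beta approximate:a} together with $\frac{\alpha\beta}{\alpha^2+\beta^2}+\arcsin\frac{\alpha}{\sqrt{\alpha^2+\beta^2}}\le\frac{\pi}{2}$ one gets $\alpha_{k+1}\le(1-\mu+\mu|\zeta_k|)\alpha_k+\mu$, which yields the invariant $\alpha_k\le 6/5$; from \eqref{alpha beta approximate:b} one gets $\beta_{k+1}\le(1-\mu+\mu|\rho_k|)\beta_k+\tfrac{2\mu}{\pi}$, and a short three-case split on $\beta_k$ gives $\beta_{k+1}\le 3/2$. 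Then $r_{k+1}^2\le(6/5)^2+(3/2)^2<4$. This is both simpler and rigorous.

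For the lower bound, your attractor picture (while $\alpha_k\ll\beta_k$, $B_k\ge 1$ iff $\beta_k\le 2/\pi$) is correct and is essentially what the paper uses in its Stage~1.2. The gap is the sentence ``$\omega_k$ reaches $\Theta(1)$ (and thus Phase~1 terminates) \ldots\ before $\beta_k$ can depart substantially from the attractor.'' Reaching $\omega_k=\Theta(1)$ is \emph{not} termination: precisely once $\alpha_k\asymp\beta_k$ the attractor argument fails, $B_k$ can be as small as $1-1.1\mu$ at every subsequent step, and $\beta_k$ \emph{does} depart substantially (it must fall to $\gamma/2\ll 2/\pi$). What you need is that only $O(1/\mu)$ such post-attractor steps remain --- not $O(\log n)$ --- so that $\beta_k$ loses only a constant factor. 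That bound does not follow from \eqref{omega} alone, which controls the ratio $\alpha_k/\beta_k$ but neither coordinate individually. The paper obtains it by introducing $T_{\gamma,2}=\min\{t:\beta_{t+1}\le\gamma/2\}$, noting $\beta_k\ge\gamma/2$ tautologically for $k\le T_{\gamma,2}$, and then proving $T_\gamma-T_{\gamma,2}\lesssim 1/\mu$; this last step requires $\alpha_{T_{\gamma,2}}\gtrsim 1$, which is the substance of the paper's three-stage analysis (Facts~1--3) tracking both $\alpha_t$ and $\beta_t$ through intermediate stopping times $T_{1,1}$ and $T_1$. Your proposal contains none of this tracking, and without it the induction on $\beta_{k+1}\ge c_l$ cannot be closed.
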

\begin{proof}
\setlength{\topsep}{0pt}
    See Section \ref{proof for bound of r_k}.
\end{proof}

Now, we can give the proof of Theorem \ref{main theorem}.

\begin{proof}[Proof of Theorem \ref{main theorem}]
Let \begin{equation}\label{def:t gamma}
        T_{\gamma}=\min\left\{k:\left|1-\alpha_k \right\vert\leq\frac{1}{2}\gamma \enspace \mbox{and}  \enspace \beta_k \leq\frac{1}{2}\gamma \right\}.
    \end{equation}
According to \eqref{eq:phase2}, we only need to show $T_{\gamma}\lesssim\log n$. To this end, define
\begin{equation*}
    T_{\gamma,2}=\min\left\{k:\beta_{k+1}\leq\frac{1}{2}\gamma\right\}.
\end{equation*}
Under the initialization conditions (\ref{initial}), it follows from Lemma \ref{lemma:bound} and Lemma \ref{bound of r_k} that  the properties (\ref{alpha beta approximate}) and \eqref{omega} hold for all $0\le k\le T$. Observe that 
    \begin{equation*}
        \tan{\omega_0}=\frac{\alpha_0}{\beta_0}={O}(\frac{1}{\sqrt{n\log n}}).
    \end{equation*}
    Using (\ref{omega}), one has
    \begin{align*}
        \tan{\omega_{k}}\geq (1+\frac{1}{4}\mu)^{k}\tan{\omega_0}.
    \end{align*}
Define a critical moment regarding $\omega_k$ as 
\begin{equation}
    T_{\omega}=\min\left\{k:\omega_{k+1}\geq\frac{\pi}{2}-\frac{1}{4}\gamma\right\}.
\end{equation}
    Let $k_0$ satisfies $(1+\frac{1}{4}\mu)^{k_0}\tan{\omega_0}= \tan{(\frac{\pi}{2}-\frac{1}{4}\gamma)}$. One can check that  
    \begin{align*}
        k_0={O}\xkh{\frac{\log{\frac{1}{\gamma}}\sqrt{n\log n}}{\log{(1+\frac{1}{4}\mu)}}}={O}(\frac{\log n}{\mu}).
    \end{align*}
    This further implies that
    \begin{equation}
        \omega_k\geq\frac{\pi}{2}-\frac{1}{4}\gamma, \quad \forall k\geq k_0 \quad \mbox{and} \quad T_{\omega}+1\leq k_0 \lesssim\log n.
    \end{equation}
    Moreover, when $k \geq T_{\omega}+1$ , since $\cos{\omega_{k}}\leq\cos{(\frac{\pi}{2}-\frac{1}{4}\gamma)}\leq\frac{1}{4}\gamma$, combined with Lemma \ref{bound of r_k}, one has 
    \begin{equation}
        \beta_k=r_k\cos{\omega_k}\leq \frac{1}{2}\gamma \quad \mbox{and} \quad
        T_{\gamma,2}+1\leq T_{\omega}+1\lesssim \log n. 
    \end{equation}

    Next, we show that $T_{\gamma}\lesssim\log n$.  Actually, if $T_{\gamma}\leq T_{\omega}$ then it holds trivially. Otherwise, we prove 
    \begin{equation} \label{eq:TminuTgam}
    T_{\gamma}-T_{\omega}\lesssim{O}(\frac{1}{\mu}).
    \end{equation}
 For any $T_{\omega}+1\leq k\leq T_{\gamma}$, we have established that $\omega_k\geq \frac{\pi}{2}-\frac{1}{4}\gamma$. Consequently, the triangle inequality further derives that
    \begin{align*}
        \|\vz_{k+1}-\vx\Vert &= \norms{\vz_k-\vx-\mu\left[\vz_k-(1-\frac{2\theta_k}{\pi})\vx\right]+\mu\frac{2\sin{\theta_k}}{\pi}\frac{\vz_k}{\|\vz_k\Vert}+\mu r(\vz_k)} \\
        &\leq (1-\mu)\norms{\vz_k-\vx} + \frac{2}{\pi}\mu \theta_k\|\vx\Vert + \frac{2}{\pi}\mu \theta_k |\sin{\theta_k}\vert+\mu \|r(\vz_k)\Vert \\
        &\leq (1-\mu)\norms{\vz_k-\vx} +\mu \left(\frac{1}{\pi}\gamma+\frac{c}{\log \tilde{m}}\right).
    \end{align*}
By recursion, we establish
    \begin{equation*}
        \|\vz_{T_{\gamma}}-\vx\Vert \leq(1-\mu)^{T_{\gamma}-T_{\omega}-1}\|\vz_{T_{\omega}+1}-\vx\Vert + \left(\frac{4}{\pi}\epsilon+\frac{c}{\log \tilde{m}}\right).
    \end{equation*}
    Note that $\frac{1}{\pi}\gamma+\frac{c}{\log \tilde{m}}\leq \frac{1}{2}\gamma$. Lemma \ref{bound of r_k} gives $\|\vz_{T_{\omega}+1}-\vx\Vert \leq \|\vz_{T_{\omega}+1}\Vert+\|\vx\Vert\leq 4$. Moreover, it holds  
\[
(1-\mu)^{T_{\gamma}-T_{\omega}-1}\|\vz_{T_{\omega}+1}-\vx\Vert \leq \frac{1}{2}\gamma,
\]
which gives $T_{\gamma}-T_{\omega}-1\lesssim \frac{1}{\mu}$. Then we complete the proof that $T_{\gamma}=T_{\omega}+1+(T_{\gamma}-T_{\omega}-1)\lesssim \log n$.

\end{proof}

\section{Numerical performance and Conclusion}
\subsection{Numerical performance}
\noindent In this section, our numerical experiments demonstrate that the randomly initialized RWF algorithm can converge to a neighborhood of the target vector $\vx$ through the implementation of ${O}(\log n)$ iterations. Furthermore, we conduct a comprehensive performance evaluation of the Reshaped Wirtinger Flow (RWF) algorithm against the conventional Wirtinger Flow (WF) method under random initialization conditions. All numerical experiments are conducted using Matlab 2020a and carried out on a computer equipped with Intel Core i7 2.30GHz CPU and 32GB RAM. 
\begin{figure}[H]
    \centering
    \begin{subfigure}[b]{0.48\textwidth} 
        \centering
        \includegraphics[width=\linewidth]{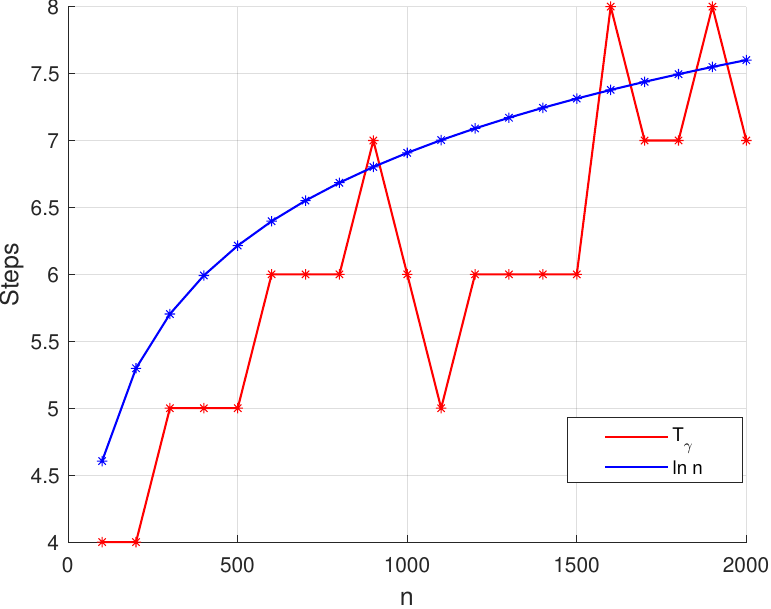}
        \caption{$\gamma=0.5$}
        \label{fig:gamma=0.5}
    \end{subfigure}
    \hfill 
    \begin{subfigure}[b]{0.48\textwidth}
        \centering
        \includegraphics[width=\linewidth]{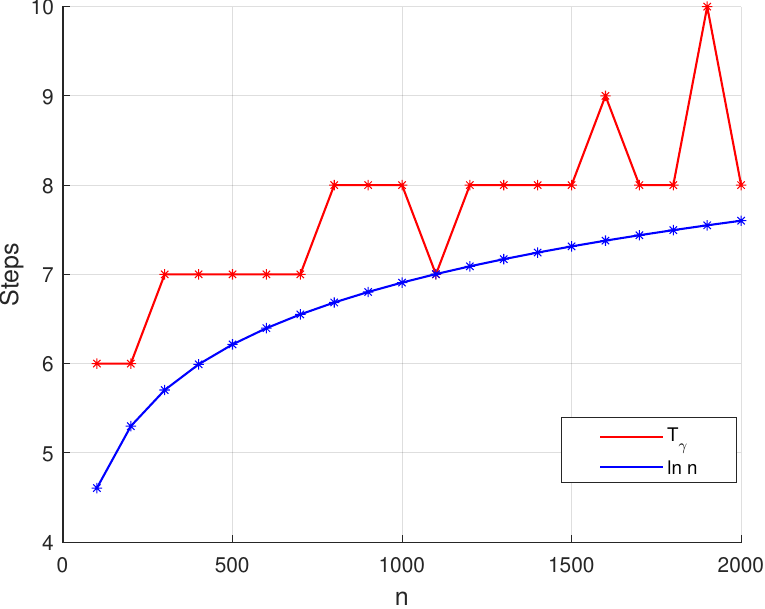}
        \caption{$\gamma=0.1$}
        \label{fig:gamma=0.1}
    \end{subfigure}
    \caption{This figure demonstrates the evolution of $T_{\gamma}$ under Algorithm \ref{algorithm} with the parameter setting $m=10n$.}
    \label{fig:gamma}
\end{figure}
\noindent Figure \ref{fig:gamma} presents the evolution of the stopping time $T_{\gamma}$ for $\gamma=\frac{1}{2}$ and $\gamma=\frac{1}{10}$ respectively with n ranging from 100 to 2000 (where $m = 10n$). The results demonstrate that $T_{\gamma}$ scales with ${O}(\log n)$, which is consistent with our theoretical conclusion. 

\begin{figure}[htbp]
    \centering
    \includegraphics[width=0.5\linewidth]{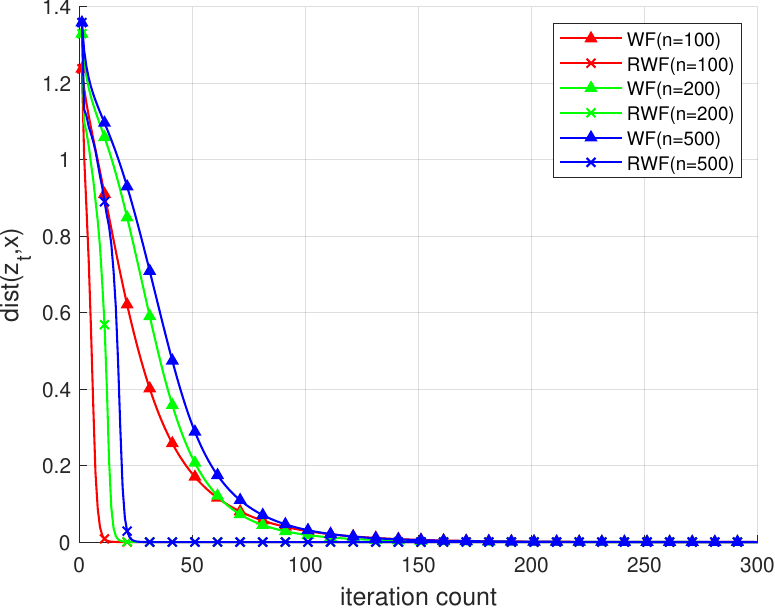}
    \caption{Comparison of sample complexity among WF and RWF.}
    \label{fig:enter-label}
\end{figure} 
\noindent Figure \ref{fig:enter-label} compares the convergence rates of the WF and RWF algorithms under random initialization conditions. For WF algorithm, we adopt the parameter settings from \cite{chen2019gradient} as step size $\mu_t\equiv 0.1$, while the RWF algorithm is implemented with $\mu_k=0.5$ and $\gamma=0.5$. All experiments are conducted under identical conditions, including the same random initial point $\vz_0$ and sampling ratio $m=10n$, with problem dimensions $n \in \{100, 200, 500\}$. The results demonstrate that the RWF algorithm achieves significantly faster convergence.

\begin{figure}[htbp]
    \centering
    \includegraphics[width=0.5\linewidth]{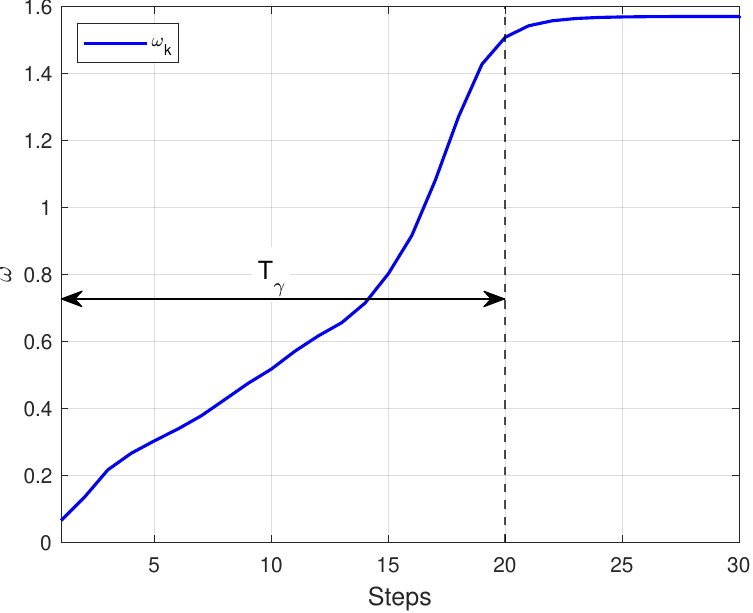}
    \caption{Dynamical evolution of $\omega_k$ during Phase 1 under parameter configuration $n=500$ and $m=10n$.}
    \label{fig:omega}
\end{figure}

\begin{figure}[htbp]
    \centering
    \includegraphics[width=0.5\linewidth]{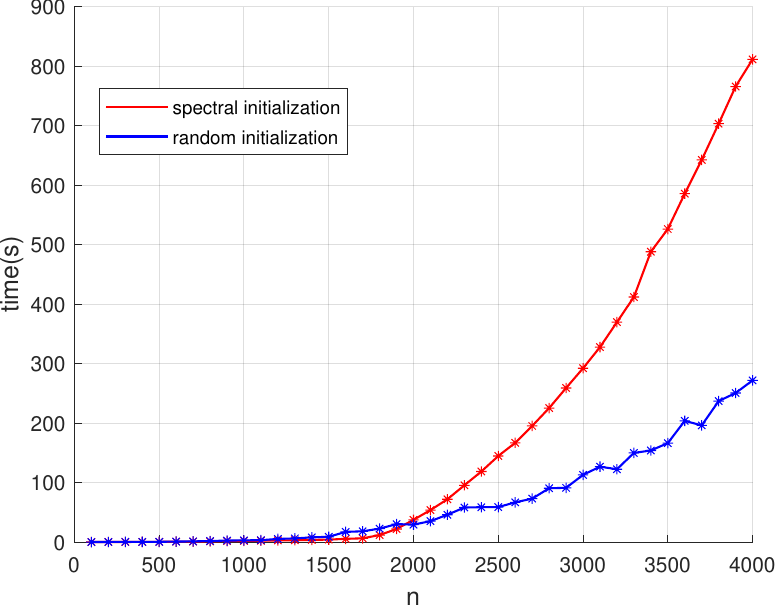}
    \caption{Comparison of the time required for Algorithm 1 and spectral initialization to satisfy $\dist(\vz,\vx)\leq \gamma$, where $\gamma=0.1$ and $m=10n$.}
    \label{fig:time for initialization}
\end{figure}

\noindent Figure \ref{fig:omega} illustrates the evolution of $\omega_k$, consistent with (\ref{omega}) in Lemma \ref{lemma:bound} under parameter configuration $\mu=0.5$ and $\gamma=0.1$. The results reveal that during Phase 1, the complementary angle between the iterative sequence $\vz_k$ and the target vector $\vx$ exhibits monotonic increase. This angular progression demonstrates geometrically that the iterates converge gradually toward $\vx$.

\noindent Figure \ref{fig:time for initialization} shows the comparison of the time required in Algorithm \ref{algorithm} to achieve the same accuracy as spectral initialization. The ordinate represents the time spent, in seconds. The step size $\mu$ in Algorithm 1 sets 0.5. It can be seen from the figure that as $n$ increases, the time required for spectral initialization increases significantly, which is because spectral initialization needs to compute the principal eigenvector of the matrix. The RWF algorithm using random initialization has a significant advantage in terms of time.

\section{Conclusion and discussion}
This paper investigates the convergence guarantees of the Reshaped Wirtinger Flow (RWF) algorithm under random initialization. Our theoretical analysis shows that, with random initialization, the RWF algorithm achieves $\epsilon$-accuracy within \(
O\big(\log n + \log(1/\epsilon)\big)
\) iterations, provided the sampling ratio $m$ is on the order of ${O}(n \log^2 n \log^3 m)$. Numerical experiments not only validate the predicted convergence rate but also demonstrate that after approximately ${O}(\log n)$ iterations, the algorithm enters a sufficiently small constant neighborhood around the target vector $\vx$.

The current analysis has three main limitations, which suggest natural directions for future work: (i) our theory assumes noiseless measurements, whereas practical applications often involve noise and thus require robust extensions; (ii) the assumption that measurement vectors $\va_i$ follow a Gaussian distribution could be generalized to sub-Gaussian distributions to broaden applicability; and (iii) the presence of sign and absolute value functions introduces significant technical challenges, including loss of continuity and separability. Although our resampling-based analysis effectively handles these nonsmooth components, it does so at the expense of moderately increased sample complexity. Developing more refined analytical techniques may reduce this requirement. These directions offer promising avenues to strengthen both the theoretical foundation and practical utility of the RWF algorithm.

\section{Preliminaries}
\begin{lemma} 
    Assume that $\va_i\sim \mathcal{N}(\mathbf{0},\mathbf{I}_n)$ is a Gaussian random vector. For any fixed $\vx$, $\vz\in \mathbb{R}^n$, assuming that $\theta\in (0,\frac{\pi}{2}) $ is the angle between $\vx$ and $\vz$, one has
    \begin{equation}
        \mathbb{E}_{\va_i\sim \mathcal{N}(\mathbf{0},\mathbf{I}_n)}\sigma(\va_i^\T\vz) |\va_i^\T\vx\vert \va_i =\left(1-\frac{2\theta}{\pi}\right)\vx+\frac{2\sin{\theta}}{\pi}\frac{\vz}{\|\vz\Vert}\|\vx\Vert.
    \end{equation}
\label{expectation}    
\end{lemma}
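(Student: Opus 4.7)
The strategy is to reduce the expectation to a two-dimensional integral by working in an orthonormal basis aligned with $\mathrm{span}(\vx, \vz)$. I would pick $\vu_1 = \vx/\norms{\vx}$ and $\vu_2$ so that $\vz/\norms{\vz} = \cos\theta\,\vu_1 + \sin\theta\,\vu_2$, complete to an orthonormal basis $\{\vu_1, \ldots, \vu_n\}$, and expand $\va_i = \sum_{j=1}^n g_j \vu_j$ with $g_j := \va_i^\T \vu_j$ i.i.d.\ $\mathcal{N}(0,1)$. Under this decomposition, the scalar factor $\sigma(\va_i^\T \vz)\abs{\va_i^\T \vx} = \norms{\vx}\,\sigma(\cos\theta\, g_1 + \sin\theta\, g_2)\,|g_1|$ depends only on $(g_1, g_2)$, so for each $j \geq 3$ the independence of $g_j$ from $(g_1, g_2)$ together with $\mathbb{E}[g_j] = 0$ forces the $\vu_j$-component of the expectation to vanish. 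The expectation therefore lies in the $(\vu_1, \vu_2)$-plane, and it suffices to evaluate the two scalars
\[
I_1 = \mathbb{E}\bigl[\sigma(\cos\theta\, g_1 + \sin\theta\, g_2)\,|g_1|\,g_1\bigr], \qquad I_2 = \mathbb{E}\bigl[\sigma(\cos\theta\, g_1 + \sin\theta\, g_2)\,|g_1|\,g_2\bigr],
\]
after which the expectation equals $\norms{\vx}(I_1 \vu_1 + I_2 \vu_2)$.

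Next, I would compute $I_1$ and $I_2$ in polar coordinates $(g_1, g_2) = (r\cos\phi, r\sin\phi)$. Both integrals factor as the radial moment $\int_0^\infty r^3 e^{-r^2/2}\,dr = 2$ times an angular integral on $[0, 2\pi)$, because $\cos\theta\, g_1 + \sin\theta\, g_2 = r\cos(\phi - \theta)$ implies the sign factor depends only on $\phi$. Using $|\cos\phi|\cos\phi = \sigma(\cos\phi)\cos^2\phi$, the angular integrand for $I_1$ becomes $\sigma(\cos(\phi-\theta))\sigma(\cos\phi)\cos^2\phi$, piecewise equal to $\pm\cos^2\phi$ on the four arcs determined by the signs of $\cos\phi$ and $\cos(\phi - \theta)$. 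Evaluating with the antiderivative $\phi/2 + \sin(2\phi)/4$ yields $I_1 = 1 - 2\theta/\pi + \sin(2\theta)/\pi$; an analogous split with $\int \sin\phi\cos\phi\,d\phi = -\cos(2\phi)/4$ gives $I_2 = 2\sin^2\theta/\pi$. Substituting back and using $\sin(2\theta) = 2\sin\theta\cos\theta$ together with $\cos\theta\,\vu_1 + \sin\theta\,\vu_2 = \vz/\norms{\vz}$ rewrites $\norms{\vx}(I_1 \vu_1 + I_2 \vu_2)$ as $(1 - 2\theta/\pi)\vx + \frac{2\sin\theta}{\pi}\norms{\vx}\vz/\norms{\vz}$, which is the claimed identity.

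The main obstacle is purely bookkeeping: correctly identifying the four sign regions of $\sigma(\cos\phi)\sigma(\cos(\phi - \theta))$ and keeping the trigonometric evaluations at the endpoints $-\pi/2$, $\theta - \pi/2$, $\pi/2$, $\theta + \pi/2$, $3\pi/2$ consistent. As a cross-check I would re-derive the formula via the vector Stein identity $\mathbb{E}[f(\va_i)\va_i] = \mathbb{E}[\nabla f(\va_i)]$ applied to $f(\va_i) = \sigma(\va_i^\T \vz)\sigma(\va_i^\T \vx)(\va_i^\T \vx) = \sigma(\va_i^\T \vz)\abs{\va_i^\T \vx}$. Distributing the gradient via the product rule produces three pieces: a term from $\nabla\sigma(\va_i^\T \vx) = 2\delta(\va_i^\T \vx)\vx$ that vanishes because $\delta(t)\,t \equiv 0$; a term $\sigma(\va_i^\T \vz)\sigma(\va_i^\T \vx)\vx$ whose expectation equals $(1-2\theta/\pi)\vx$ by the classical arccos-kernel identity; and a term $2\delta(\va_i^\T \vz)\abs{\va_i^\T \vx}\vz$ whose expectation reduces to the marginal density $(\sqrt{2\pi}\norms{\vz})^{-1}$ of $\va_i^\T \vz$ at zero times the conditional half-normal mean $\mathbb{E}[\abs{\va_i^\T \vx} \mid \va_i^\T \vz = 0] = \norms{\vx}\sin\theta\sqrt{2/\pi}$, yielding the second summand $\frac{2\sin\theta}{\pi}\norms{\vx}\vz/\norms{\vz}$. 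The distributional derivative of the sign function can be rigorously justified by a standard mollification argument if this alternative route is preferred.
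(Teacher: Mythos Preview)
Your proposal is correct and follows essentially the same route as the paper: reduce to the two-dimensional span of $\vx$ and $\vz$ using rotational invariance of the Gaussian, then evaluate the two nonvanishing coordinates by a polar-coordinate integral with the radial moment $\int_0^\infty r^3 e^{-r^2/2}\,dr = 2$. The only cosmetic difference is that the paper aligns the first basis vector with $\vz$ rather than with $\vx$, and your Stein-identity cross-check is an extra verification not present in the paper.
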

\vspace{-1.5\baselineskip}
\begin{proof}
Due to the homogeneous, we assume  that  $\vx,\vz\in\mathbb{S}^{n-1}$.  Without loss of generality, we assume $\vz=\ve_1$ and $\vx=(\cos{\theta},\sin{\theta},0,\ldots,0)^{\T}$.
Therefore, we obtain
\begin{align*}
    \mathbb{E}_{\va_i\sim \mathcal{N}(\mathbf{0},\mathbf{I}_n)}\sigma(\va_i^\T\vz) |\va_i^\T\vx\vert \va_i  = \mathbb{E}_{\va_i\sim \mathcal{N}(\mathbf{0},\mathbf{I}_n)} \sigma(a_{i,1})|a_{i,1}\cos{\theta}+a_{i,2}\sin{\theta}\vert\va_i.   
\end{align*}
Let $a_{i,1}=r\cos{\alpha}$ and $a_{i,2}=r\sin{\alpha}$ where $r\in (0,+\infty)$, $\alpha\in(0,2\pi)$. It is evident that in the above equation the $3$-rd to $n$-th components are zero. Hence, only the first and second components need to be computed. Firstly, we calculate the first component by
\begin{align*}
    &\int_{0}^{2\pi}\int_0^{+\infty}\frac{1}{2\pi} e^{-\frac{1}{2}r^2}r\sigma(r\cos{\alpha})|r\cos{(\alpha-\theta)}\vert r\cos{\alpha} d\alpha dr \\
    & = \int_{0}^{+\infty}\frac{1}{2\pi}r^3 e^{-\frac{1}{2}r^2} dr  \int_{0}^{2\pi}|\cos{(\alpha-\theta)}\cos\alpha\vert d\alpha =\frac{2\sin{\theta}}{\pi}+(1-\frac{2\theta}{\pi})\cos{\theta}.
\end{align*}
Secondly, a similar calculation is performed for the second component as following.
\begin{align*}
    \int_{0}^{2\pi}\int_0^{+\infty} \frac{1}{2\pi}e^{-\frac{1}{2}r^2}r\sigma(r\cos{\alpha})|r\cos{(\alpha-\theta)}\vert r\sin{\alpha}  dr d\alpha =(1-\frac{2\theta}{\pi})\sin{\theta}.
\end{align*}
This completes the proof.
\end{proof}

\begin{lemma}\cite[Corollary 2.8.3]{vershynin2020high}\label{bernstein}
Let $X_1,\ldots, X_m$ be independent, mean zero, sub-exponential random variables. Then, for every $t\geq 0$, one has
$$ \mathbb{P}\left\{ \left| \frac{1}{m}\sum_{i=1}^m X_i \right\vert \geq t \right \} \leq 2\exp{\left[-c\min\left(\frac{t^2}{K^2},\frac{t}{K}\right)m\right]}$$
where $K=\max_i{\| X_i \Vert_{\psi_1}}$.    
\label{theorem 2.8.3 Bernstein's inequality}
\end{lemma}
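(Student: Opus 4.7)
The plan is to combine the classical Chernoff--Cram\'er exponential Markov bound with a moment generating function estimate characteristic of the sub-exponential class, and then to optimize over the free dual parameter $\lambda$. Writing $S_m=\sum_{i=1}^m X_i$, for any $\lambda>0$ Markov's inequality applied to $e^{\lambda S_m}$ gives
\[
\mathbb{P}(S_m\geq mt)\leq e^{-\lambda m t}\,\mathbb{E}[e^{\lambda S_m}] = e^{-\lambda m t}\prod_{i=1}^m \mathbb{E}[e^{\lambda X_i}],
\]
where the factorization uses independence of the $X_i$.

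The core technical input I would invoke is the sub-exponential MGF bound: if $\mathbb{E}X=0$ and $\|X\|_{\psi_1}\leq K$, then there exist absolute constants $c_0,C>0$ such that
\[
\mathbb{E}[e^{\lambda X}]\leq \exp(C\lambda^2 K^2) \quad \text{whenever } |\lambda|\leq c_0/K.
\]
This is one of the standard equivalent formulations of the $\psi_1$ norm. The quickest route is via the moment characterization $\mathbb{E}|X|^p\leq C_1^p\, p!\, K^p$ together with term-by-term control of the Taylor expansion $e^{\lambda X}=1+\lambda X+\sum_{p\geq 2}(\lambda X)^p/p!$; the mean-zero assumption kills the linear term, and restricting $|\lambda|\leq c_0/K$ makes the remaining series a convergent geometric sum dominated by $C\lambda^2 K^2$. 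Inserting this into the Chernoff step yields, for every $0<\lambda\leq c_0/K$,
\[
\mathbb{P}(S_m\geq mt)\leq \exp\bigl(-\lambda m t + C m \lambda^2 K^2\bigr).
\]

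Next I would optimize over $\lambda$. The unconstrained minimizer is $\lambda^{\star}=t/(2CK^2)$, whose admissibility is governed by whether $t\leq 2Cc_0 K$. In that small-$t$ regime, substituting $\lambda^{\star}$ produces the Gaussian-type tail $\exp(-m t^2/(4CK^2))$. In the complementary large-$t$ regime the optimum is attained at the boundary $\lambda=c_0/K$; absorbing the quadratic correction into half of the linear term produces the exponential tail $\exp(-c_0 m t/(2K))$. Merging the two regimes yields a one-sided bound of the form $\exp\bigl(-c\, m\min(t^2/K^2,\, t/K)\bigr)$ for a universal constant $c>0$.

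To finish, I would run the identical argument with $-X_i$ in place of $X_i$, which handles the lower tail $S_m\leq -mt$, and then union bound the two events to pick up the factor of $2$ stated in the lemma; finally rescaling $t \mapsto t$ relative to the normalization $S_m/m$ gives the displayed form. The only non-routine ingredient is the sub-exponential MGF bound in the mean-zero case; Chernoff, independence, and optimization over $\lambda$ are mechanical. I would expect the main obstacle, were one to prove everything from scratch rather than cite \cite{vershynin2020high}, to be passing cleanly between the four equivalent definitions of the $\psi_1$ norm so that the $p!$-growth moment bound used above is available with explicit constants.
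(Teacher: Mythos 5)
Your argument is correct and is essentially the same Chernoff--plus--sub-exponential MGF proof given in the cited source (Vershynin, Corollary 2.8.3); the paper itself does not reprove this lemma but simply imports it. The only point worth being careful about, as you note, is that the MGF bound $\mathbb{E}[e^{\lambda X_i}]\leq \exp(C\lambda^2\|X_i\|_{\psi_1}^2)$ for $|\lambda|\leq c_0/\|X_i\|_{\psi_1}$ must be applied to each $X_i$ with its own $\psi_1$ norm and then uniformized via $K=\max_i\|X_i\|_{\psi_1}$, which your optimization over $\lambda$ and the boundary-versus-interior case split handle correctly.
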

\begin{lemma}\cite[Corollary 4.2.13]{vershynin2020high}
    The covering numbers of the Euclidean ball $\mathcal{B}_2^n$ satisfy the following for any $\epsilon >0$, \begin{equation*}
        \left(\frac{1}{\epsilon}\right)^n \leq \mathcal{N}(\mathcal{B}_2^n,\epsilon)\leq \left(\frac{2}{\epsilon}+1\right)^n.
    \end{equation*}
    The same upper bound is true for the Euclidean sphere $\mathcal{S}^{n-1}$.
\label{theorem: eplison-net}
\end{lemma}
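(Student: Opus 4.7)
The plan is to prove the two inequalities separately by a standard volumetric argument, and then observe that the sphere case follows from essentially the same packing construction. Throughout, let $\mathrm{vol}(\cdot)$ denote Lebesgue measure on $\mathbb{R}^n$, and recall the scaling relation $\mathrm{vol}(t \mathcal{B}_2^n) = t^n \, \mathrm{vol}(\mathcal{B}_2^n)$ for $t > 0$, which is the only nontrivial fact about volumes that I will need.

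For the upper bound $\mathcal{N}(\mathcal{B}_2^n,\epsilon) \le (2/\epsilon + 1)^n$, I would pass through the packing number. Let $\{x_1, \ldots, x_N\} \subset \mathcal{B}_2^n$ be a \emph{maximal} $\epsilon$-separated set, i.e. $\|x_i - x_j\| > \epsilon$ for all $i \neq j$, and no further point of $\mathcal{B}_2^n$ can be added while preserving this property. The key observation is that any such maximal separated set is automatically an $\epsilon$-net for $\mathcal{B}_2^n$: if some $y \in \mathcal{B}_2^n$ were at distance greater than $\epsilon$ from every $x_i$, maximality would be contradicted. Hence $\mathcal{N}(\mathcal{B}_2^n,\epsilon) \le N$. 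To bound $N$, note that the open balls $x_i + (\epsilon/2)\mathcal{B}_2^n$ are pairwise disjoint (their centres are more than $\epsilon$ apart), and each one lies inside the enlarged ball $(1+\epsilon/2)\mathcal{B}_2^n$ since the centres lie in $\mathcal{B}_2^n$. Summing volumes,
\begin{equation*}
N \cdot (\epsilon/2)^n \, \mathrm{vol}(\mathcal{B}_2^n) \;\le\; (1 + \epsilon/2)^n \, \mathrm{vol}(\mathcal{B}_2^n),
\end{equation*}
which rearranges to $N \le (2/\epsilon + 1)^n$.

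For the lower bound $\mathcal{N}(\mathcal{B}_2^n,\epsilon) \ge (1/\epsilon)^n$, I would argue directly from the covering definition. If $y_1,\ldots,y_M$ are such that $\mathcal{B}_2^n \subset \bigcup_{i=1}^{M}(y_i + \epsilon \mathcal{B}_2^n)$, then
\begin{equation*}
\mathrm{vol}(\mathcal{B}_2^n) \;\le\; \sum_{i=1}^{M} \mathrm{vol}(y_i + \epsilon \mathcal{B}_2^n) \;=\; M \cdot \epsilon^n \, \mathrm{vol}(\mathcal{B}_2^n),
\end{equation*}
so $M \ge \epsilon^{-n}$, and minimizing over such coverings gives the claim.

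Finally, for the sphere $\mathcal{S}^{n-1}$, the identical packing construction works: take a maximal $\epsilon$-separated set on $\mathcal{S}^{n-1}$, which is an $\epsilon$-net for $\mathcal{S}^{n-1}$ by the same maximality argument, and note that the disjoint $\epsilon/2$-balls around its points are still contained in $(1+\epsilon/2)\mathcal{B}_2^n$ (since the centres satisfy $\|x_i\|=1 \le 1$), yielding the same upper bound. No genuine obstacle stands in the way; the only place requiring a moment of care is the passage from a maximal separated set to an $\epsilon$-net, and the volumetric inclusion of the packing balls into the slightly enlarged Euclidean ball, both of which are direct.
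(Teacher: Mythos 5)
Your proof is correct: the maximal $\epsilon$-separated-set construction plus the volume comparison for the upper bound, and the direct volume-covering argument for the lower bound, are exactly the standard volumetric proof of this result. The paper itself does not prove the lemma but simply cites \cite[Corollary 4.2.13]{vershynin2020high}, and your argument is precisely the one given in that reference, including the observation that the same packing bound transfers to the sphere $\mathcal{S}^{n-1}$.
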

\begin{lemma}\cite[Exercise 4.4.2]{vershynin2020high}
    Let $\vx\in\mathbb{R}^n$ and $\mathcal{N}$ be an $\epsilon$-net of the sphere $\mathcal{S}^{n-1}$, one has
    \begin{equation}
        \sup_{\vy\in\mathcal{N}}\langle\vx,\vy\rangle\leq \|\vx\Vert_2\leq \frac{1}{1-\epsilon}\sup_{\vy\in\mathcal{N}}\langle\vx,\vy\rangle.
    \end{equation}
    \label{exercise 4.4.2}
\end{lemma}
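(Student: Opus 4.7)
The plan is to treat the two inequalities as essentially independent facts and handle each via the Cauchy--Schwarz inequality, invoking the net property only once. The lower bound is immediate: since $\mathcal{N}$ is a subset of the unit sphere $\mathcal{S}^{n-1}$, every $\vy\in\mathcal{N}$ has $\|\vy\Vert_2=1$, so Cauchy--Schwarz gives $\langle\vx,\vy\rangle\le\|\vx\Vert_2$, and taking the supremum over $\vy\in\mathcal{N}$ yields the left-hand inequality at once.

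The upper bound is where the $\epsilon$-net property enters. I would first dispose of the trivial case $\vx=\mathbf{0}$ and then select the norm-attaining direction $\vy^{*}:=\vx/\|\vx\Vert_2\in\mathcal{S}^{n-1}$, so that $\langle\vx,\vy^{*}\rangle=\|\vx\Vert_2$ exactly. By definition of an $\epsilon$-net, some $\vy_0\in\mathcal{N}$ satisfies $\|\vy^{*}-\vy_0\Vert_2\le\epsilon$. Then I would split the inner product as
\[
\|\vx\Vert_2=\langle\vx,\vy^{*}\rangle=\langle\vx,\vy_0\rangle+\langle\vx,\vy^{*}-\vy_0\rangle,
\]
bound the first term by $\sup_{\vy\in\mathcal{N}}\langle\vx,\vy\rangle$, and bound the second term by $\epsilon\|\vx\Vert_2$ via a second application of Cauchy--Schwarz together with the net estimate. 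A single rearrangement, dividing through by $1-\epsilon$, produces the stated inequality.

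Since the argument is a textbook two-step combination of Cauchy--Schwarz and the approximation property of the net, there is essentially no technical obstacle. The only condition to keep in mind is the implicit assumption $\epsilon<1$, without which the factor $1/(1-\epsilon)$ is meaningless; this is standard in any $\epsilon$-net used in covering-based concentration arguments and is satisfied in every invocation within the paper.
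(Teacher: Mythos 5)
Your proof is correct: the lower bound via Cauchy--Schwarz and the upper bound via approximating $\vx/\|\vx\Vert_2$ by a net point $\vy_0$ with $\|\vx/\|\vx\Vert_2-\vy_0\Vert_2\le\epsilon$, followed by the rearrangement $(1-\epsilon)\|\vx\Vert_2\le\sup_{\vy\in\mathcal{N}}\langle\vx,\vy\rangle$, is exactly the standard argument for this statement. The paper itself gives no proof — it simply cites \cite[Exercise 4.4.2]{vershynin2020high} — and your write-up (including the caveats $\vx\neq\mathbf{0}$ and $\epsilon<1$) matches the textbook proof, so there is nothing to correct.
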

   A user-friendly version of the Bernstein inequality is as follows.
\begin{lemma}\cite[Lemma 11]{chen2019gradient}
    Consider $m$ independent random variables $w_l(1\leq l\leq m)$, each satisfying $| w_l \vert \leq B$. For each $a\geq 2$, one has 
\begin{equation}
    \left| \sum_{l=1}^m w_l-\sum_{l=1}^m \mathbb{E}[w_l] \right\vert \leq \sqrt{2a\log m\sum_{l=1}^m\mathbb{E}[w_l^2]}+\frac{2a}{3}B\log m
\end{equation}
with probability at least $1-2m^{-a}$.
\label{Chen 2019, lamma 11}
\end{lemma}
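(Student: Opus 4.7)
The plan is to derive the bound by specializing the classical Bernstein inequality for independent bounded random variables and then algebraically inverting the resulting exponential tail estimate. First I would center the variables by defining $Y_l := w_l - \mathbb{E}[w_l]$, so that $\{Y_l\}_{l=1}^m$ are independent, mean zero, and bounded; moreover $\sum_l \mathrm{Var}(Y_l) \le \sum_l \mathbb{E}[w_l^2] =: V$, which is exactly the quantity appearing inside the square root of the claim.

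Next I would invoke the classical Bernstein tail bound: for every $t \ge 0$,
$$\mathbb{P}\!\left(\left|\sum_{l=1}^m Y_l\right| \ge t\right) \le 2\exp\!\left(-\frac{t^2/2}{V + Bt/3}\right),$$
which follows via the Chernoff method together with the standard exponential-moment estimate $\mathbb{E}[\exp(\lambda Y_l)] \le \exp\bigl(\lambda^2 \mathbb{E}[w_l^2]/(2(1-|\lambda|B/3))\bigr)$ valid for $|\lambda|<3/B$; this is exactly Theorem \ref{theorem 2.8.3 Bernstein's inequality} specialized to the bounded (hence sub-exponential with $\|Y_l\|_{\psi_1} \lesssim B$) regime.

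The remaining step is to choose $t$ so that the right-hand side equals $2m^{-a}$, i.e., so that $t^2/2 \ge a\log m\,(V + Bt/3)$. Rather than solving this quadratic directly, I would verify by substitution that the candidate
$$t^\star \;:=\; \sqrt{2a V \log m} \;+\; \tfrac{2a}{3} B\log m$$
satisfies the inequality: expanding $(t^\star)^2$, discarding the (nonnegative) cross term, and comparing term-by-term with $2a\log m\,V + \tfrac{2aBt^\star\log m}{3}$ gives the bound after elementary algebra. Plugging $t^\star$ back into the Bernstein exponent then yields the claimed $2m^{-a}$ tail probability.

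The main obstacle is constant tracking in the Bernstein exponent — specifically, whether the uniform bound $B$ is imposed on $w_l$ or on the centered $Y_l$ affects whether the linear term in the denominator appears as $Bt/3$ or $2Bt/3$, and this propagates to the leading constant $\tfrac{2a}{3}$ of the sub-exponential term. A clean workaround is to split the desired tail into two regimes: a sub-Gaussian regime ($Bt \lesssim V$) where the exponent behaves like $t^2/(4V)$, and a sub-exponential regime ($Bt \gtrsim V$) where it behaves like $3t/(4B)$. Inverting each regime separately, applying the union bound, and taking the maximum directly produces the two additive terms $\sqrt{2aV\log m}$ and $\tfrac{2a}{3} B\log m$ without having to keep track of a single consolidated quadratic.
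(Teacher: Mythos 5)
Your main line of argument---centering, the classical Bernstein moment-generating-function bound $\mathbb{E}[e^{\lambda(w_l-\mathbb{E}w_l)}]\le\exp\bigl(\lambda^2\mathbb{E}[w_l^2]/(2(1-\lambda B/3))\bigr)$ (with $B$ bounding the \emph{uncentered} $w_l$, so the tail denominator is $V+Bt/3$ with $V=\sum_l\mathbb{E}[w_l^2]$), followed by direct substitution of $t^\star=\sqrt{2aV\log m}+\tfrac{2a}{3}B\log m$---is correct and is essentially the standard proof of this lemma, which the paper itself does not reprove but simply imports from Chen et al.\ (Lemma 11). Two side remarks are inaccurate though harmless to that main line: the paper's sub-exponential Bernstein inequality (Lemma \ref{bernstein}) carries an unspecified absolute constant and therefore cannot deliver the explicit constants $\sqrt{2a}$ and $\tfrac{2a}{3}$, and the same is true of your regime-splitting fallback, which only recovers these two terms with worse constants; also, in the substitution step you must keep half of the cross term (since $(t^\star)^2-\bigl(2aV\log m+\tfrac{2a}{3}B\log m\,t^\star\bigr)=t_1t_2$ with $t_1=\sqrt{2aV\log m}$, $t_2=\tfrac{2a}{3}B\log m$), whereas discarding the entire cross term would make the comparison fail.
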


The standard concentration inequality reveals that 
\begin{equation}\label{eq:max_a_1}
    \max_{1\leq i\leq m}|\va_i^{\T}\vx\vert =\max_{1\leq i\leq m} |a_{i,1}\vert \le 10\sqrt{\log m}
\end{equation}
with probability at least $1-{O}(m^{-20})$.
Additionally, apply the standard concentration inequality to see that 
\begin{equation}\label{eq:max_a_2}
    \max_{1\leq i\leq m} \| \va_i\Vert_2 \leq \sqrt{6n}
\end{equation}
with probability at least $1-{O}(m\exp{(-1.5n)})$.

\section{Proof of Lemma \ref{lemma:bound}}\label{sec:Proofs}
Consider the update rule 
\begin{equation*}
    \vz_{k+1}=\vz_{k}-\mu \nabla f(\vz_k),
\end{equation*}
where  
\[
\nabla f(\vz_k)=\frac{1}{\tilde{m}}\sum_{i=1}^{\tilde{m}} (| \va_i^{\T}\vz_k\vert-| \va_i^\T \vx\vert)\sigma(\va_i^{\T}\vz_k)\va_i,
\]
According to Lemma \ref{expectation}, one has
\begin{equation} \label{eq:expc}
    \nabla F(\vz): =\mathbb{E}_{\va_i\sim \mathcal{N}(\mathbf{0},\mathbf{I}_n)}\nabla f(\vz)=\vz-\left(1-\frac{2\theta}{\pi}\right)\vx-\frac{2\sin{\theta}}{\pi}\frac{\vz}{\|\vz\Vert}
\end{equation}
where $\theta\in [0,\frac{\pi}{2}]$ is the angle between $\vx$ and $\vz$.  Therefore, 
\begin{equation*}
    \vz_{k+1}=\vz_{k}-\mu \nabla F(\vz_k) + \mu \underbrace{\left(\nabla F(\vz_k)-\nabla f(\vz_k)\right)}_{:=r(\vz_k)}.  
    \label{iteration}
\end{equation*}
The definition (\ref{def:zk_||}) implies the following decompositions:
\begin{equation*}
    \vz_k^{||} =\langle\vz_k,\vx\rangle  \quad \mbox{and} \quad {\vz_k^{\perp}}= \vz_k-\langle\vz_k,\vx\rangle\vx.
\end{equation*}
Combine \eqref{eq:expc} and \eqref{iteration} to obtain
\begin{subequations}\label{zk+1}
\begin{align}
	\label{zk+1:1}\vz_{k+1}^{||} &=\vz_k^{||}-\mu\vz_k^{||}+\mu\left(1-\frac{2\theta_k}{\pi}\right)+\mu\frac{2\sin{\theta_k}}{\pi}\frac{\vz_k^{||}}{\|\vz_{k}\Vert}+\mu r_1(\vz_k) \\
	\label{zk+1:2}\vz_{k+1}^{\bot} &=\vz_k^{\bot}-\mu\vz_k^{\bot}+\mu\frac{2\sin{\theta_k}}{\pi}\frac{\vz_k^{\bot}}{\|\vz_{k}\Vert}+\mu r_{\bot}(\vz_k)
\end{align}
\end{subequations}
where
\begin{align*}
    r_1(\vz_k) & = \underbrace{\vz_k^{||}-\frac{1}{\tilde{m}}\sum_{i=1}^{\tilde{m}}\va_i^{\T}\vz_k\va_{i,1}}_{:=J_1} + \underbrace{\frac{1}{\tilde{m}}\sum_{i=1}^{\tilde{m}}|\va_i^\T\vx\vert\sigma(\va_i^\T\vz_k)\va_{i,1}-1+\frac{2\theta_k}{\pi}-\frac{2\sin{\theta_k}}{\pi}\frac{\vz_k^{||}}{\|\vz_k\Vert}}_{:=J_2} \\
    r_{\perp}(\vz_k) & = \underbrace{\vz_k^{\perp}-\frac{1}{\tilde{m}}\sum_{i=1}^{\tilde{m}}\va_i^{\T}\vz_k\va_{i,\perp}}_{:=J_3} + \underbrace{\frac{1}{\tilde{m}}\sum_{i=1}^{\tilde{m}}|\va_i^\T\vx\vert\sigma(\va_i^\T\vz_k)\va_{i,\perp}-\frac{2\sin{\theta_k}}{\pi}\frac{\vz_k^{\perp}}{\|\vz_k\Vert}}_{:=J_4}.  
\end{align*}
Here, $\va_{i,1} \in \R$ is the first entry of $\va_i$,  and $\va_{i,\perp}=(0, a_{i,2},\ldots, a_{i,n})^\T \in \R^n$, and $\sin{\theta_k}$ is defined as
\begin{equation}
    \sin{\theta}_k=\frac{\|\vz_k^{\perp} \Vert}{\|\vz_k\Vert}.
\end{equation}

    In the following, we analyze the upper bounds of the four terms $J_1$, $J_2$, $J_3$ and $J_4$ separately to control \eqref{zk+1:1} and \eqref{zk+1:2}.  
\begin{itemize}
    \item For the term $J_1$, decompose it as follows
    \begin{equation*}
    \begin{matrix}
        J_1 =\underbrace{\left(1-\frac{1}{\tilde{m}}\sum_{i=1}^{\tilde{m}}\va_{i,1}^2\right)\vz_k^{||}}_{:=J_{11}} - \underbrace{\frac{1}{\tilde{m}}\sum_{i=1}^{\tilde{m}}\va_{i,\perp}^{\T}\vz_k^{\perp}\va_{i,1}}_{:=J_{12}}.
        \end{matrix}
    \end{equation*}
    We first consider $J_{11}$. Lemma \ref{bernstein} gives that, for any $t \geq 0$, it holds
    \begin{equation*}
        \mathbb{P}\left\{ \left| \frac{1}{\tilde{m}}\sum_{i=1}^{\tilde{m}} \va_{i,1}^2-1\right\vert \geq t\right\}\leq 2 \exp\xkh{-\hat{c}\min{\left(\frac{t^2}{K^2},\frac{t}{K}\right)}\tilde{m}}
    \end{equation*}
    where $\hat{c}>0$ is an absolute constant and $K=\|\va_{i,1}^2\Vert_{\psi_1}={O}(1)$. 
 
    Taking $t \lesssim \sqrt{n/ \tilde{m}}$, one has 
    \begin{equation*}
    	\abs{J_{11}} = \left| \frac{1}{\tilde{m}}\sum_{i=1}^{\tilde{m}} \va_{i,1}^2-1\right\vert \abs{\vz_k^{||}} \lesssim \sqrt{\frac{n}{\tilde{m}}} \abs{\vz_k^{||}} \le \frac{c}{4 \log {\tilde{m}}} \abs{\vz_k^{||}}  
    \end{equation*}
    for a sufficient small constant $c>0$ with probability at least $1 - {O}(\exp\xkh{-c_1n})$ provided that   $\tilde{m}\gtrsim n\log^2{\tilde{m}}$. 
   For the second term $J_{12}$, Lemma \ref{Chen 2019, lamma 11} reveals that  
    \begin{equation*}
        \abs{J_{12}} = \left| \frac{1}{\tilde{m}}\sum_{i=1}^{\tilde{m}}\va_{i,\perp}^{\T}\vz_k^{\perp}\va_{i,1} \right\vert \lesssim \frac{1}{\tilde{m}}\left(\sqrt{V_1\log {\tilde{m}}}+B_1\log \tilde{m}\right)
    \end{equation*}
    with probability at least $1-{O}({\tilde{m}}^{-20})$  where $V_1$ and $B_1$ satisfy 
    \begin{equation*}
         V_1=\mathbb{E}_{\va_i\sim \mathcal{N}(\mathbf{0},\mathbf{I}_n)} \sum_{i=1}^{\tilde{m}}(\va_{i,\perp}^{\T}\vz_k^{\perp})^2\va_{i,1}^2 \quad \mbox{and} \quad B_1=\max_i{|\va_{i,\perp}^{\T}\vz_k^{\perp}\vert |\va_{i,1}}\vert.
    \end{equation*}
    The independence of $\va_i$ and $\vz_k$ implies $  V_1={\tilde{m}}\|\vz_k^{\perp}\Vert^2$. Then it follows from \eqref{eq:max_a_1} that $B_1\lesssim \log \tilde{m} \|\vz_k^{\perp}\Vert$ with high probability. When $ \tilde{m}\geq Cn\log^3{\tilde{m}} \log n$, one has 
    \begin{align*}
        \left| \frac{1}{\tilde{m}}\sum_{i=1}^{\tilde{m}}\va_{i,\perp}^{\T}\vz_k^{\perp}\va_{i,1} \right\vert 
        & \lesssim \sqrt{\frac{\log {\tilde{m}}}{\tilde{m}}}\|\vz_k^{\perp}\Vert+\frac{\log^2{\tilde{m}}}{\tilde{m}}\|\vz_k^{\perp}\Vert \overset{\mbox{(i)}}{\lesssim} \sqrt{\frac{\log \tilde{m}}{\tilde{m}}}\|\vz_k^{\perp}\Vert \\
        &\lesssim  \sqrt{\frac{\log \tilde{m}}{\tilde{m}}} \frac{1}{\sqrt{n\log n}}  \|\vz_k^{\perp}\Vert \sqrt{n\log n} \overset{\mbox{(ii)}}{\le} \frac{c}{4 \log \tilde{m}}|\vz_k^{||}\vert,
    \end{align*}
    where (i) follows from $\frac{\log^2{\tilde{m}}}{\tilde{m}}\lesssim\sqrt{\frac{\log \tilde{m}}{\tilde{m}}}$ due to $ \tilde{m}\gtrsim \log^3{\tilde{m}}$, and (ii) utilizes that 
    \[\sqrt{\frac{n\log \tilde{m}\log n}{\tilde{m}}}\lesssim \frac{c}{\log \tilde{m}}\] 
    and the induction hypotheses 
    \[\|\vz_k^{\perp}\Vert\leq 2, \quad |\vz_k^{||}\vert \geq \frac{1}{2\sqrt{n\log n}}.  \]
    In conclusion, it is established that
    \begin{equation*}
        |J_1\vert \le \abs{J_{11}} + \abs{J_{12}} \le \frac{c}{2 \log \tilde{m}}|\vz_k^{||}\vert 
    \end{equation*}
    with probability at least $1-{O}(e^{-c_1n})-{O}({\tilde{m}}^{-20})$ as long as $ \tilde{m}\geq Cn\log^3{\tilde{m}}\log n $ for a sufficiently large constant $C>0$. 
    
    \item For $J_2 $, one has  by Bernstein inequality (Lemma \ref{bernstein}),  
    \begin{align*}
        \mathbb{P}\left\{ \left|\frac{1}{\tilde{m}}\sum_{i=1}^{\tilde{m}} |\va_i^{\T}\vx\vert\sigma(\va_i^{\T}\vz_k)\va_{i,1}-\left(1-\frac{2\theta_k}{\pi}\right)-\frac{2\sin{\theta_k}}{\pi}\frac{\vz_k^{||}}{\|\vz_k\Vert} \right\vert \geq t\right\} \\
        \leq 2\exp\left[-\hat{c}\min(\frac{t^2}{K^2},\frac{t}{K})\tilde{m}\right]. 
    \end{align*}
    for any $t>0$ and some absolute constant $\hat{c}>0$ and \(K = \norms{|\va_1^{\T}\vx\vert\sigma(\va_1^{\T}\vz_k)\va_{1,1}}_{\psi_1} = O(1)\). 
    Take $t = \sqrt{\log \tilde{m}/\tilde{m}}$   to obtain  
    \begin{align*}
        |J_2\vert &= \left|\frac{1}{\tilde{m}}\sum_{i=1}^{\tilde{m}} |\va_i^{\T}\vx\vert\sigma(\va_i^{\T}\vz_k)\va_{i,1}-\left(1-\frac{2\theta_k}{\pi}\right)-\frac{2\sin{\theta_k}}{\pi}\frac{\vz_k^{||}}{\|\vz_k\Vert} \right\vert  \\
        &\le \sqrt{\frac{\log \tilde{m}}{\tilde{m}}} \le \frac{c}{2 \log \tilde{m}}|\vz_k^{||}\vert
    \end{align*}
    with probability at least $1-{O}(\tilde{m}^{-20})$ provided that \(\tilde{m} \ge C n \log^3 \tilde{m} \log n\). 
    Here, \(C>0\) is sufficiently large, and \(c>0\) is sufficiently small. 
    
    \item For $J_3 $,  divide it  as follows  
    \begin{align*}
    \begin{matrix}
        J_3=\underbrace{\left(\vz_k^{\perp}-\frac{1}{\tilde{m}}\sum_{i=1}^{\tilde{m}}\va_{i,\perp}^{\T}\vz_k^{\perp}\va_{i,\perp}\right)}_{J_{31}}-\underbrace{\frac{1}{\tilde{m}}\sum_{i=1}^{\tilde{m}}\va_{i,1}\vz_k^{||}\va_{i,\perp}}_{J_{32}}.
    \end{matrix}    
    \end{align*}
    We first consider  the term $J_{31}$.  Letting $\mathcal{N}$ be an $\epsilon$-net of the sphere $\mathcal{S}^{n-2}$,   
    Lemma \ref{bernstein} gives 
    \begin{equation*}
    	\mathbb{P} \xkh{\left|\frac{1}{\tilde{m}}\sum_{i=1}^{\tilde{m}}\va_{i,\perp}^{\T}\vz_k^{\perp}\va_{i,\perp}^{\T}\vy-{\vz_k^{\perp}}^{\T}\vy \right\vert \ge t} \le 2 \exp\xkh{-\hat{c} \min\xkh{\frac{t^2}{K^2}, \frac{t}{K}} \tilde{m}}
    \end{equation*}
    for fixed $\vz_k^{\perp}\in\mathbb{R}^{n-1}$ and any $\vy\in\mathcal{N}$.  Take \(t \lesssim \sqrt{n \log \tilde{m} / \tilde{m}}\) to get 
    \[\left|\frac{1}{\tilde{m}}\sum_{i=1}^{\tilde{m}}\va_{i,\perp}^{\T}\vz_k^{\perp}\va_{i,\perp}^{\T}\vy-{\vz_k^{\perp}}^{\T}\vy\right\vert \le \frac{3 c c_l}{16\log \tilde{m}} \le \frac{3 c}{16\log {\tilde{m}}}\norms{\vz_k^{\perp}}\]
    with probability exceeding \(1 - 2\exp(-\hat{c} n \log \tilde{m})\) as long as \(\tilde{m} \gtrsim n \log^3 \tilde{m}\). 
    Then apply  Lemma \ref{exercise 4.4.2}  to get  
    \begin{align*}
        \left\|\vz_k^{\perp}-\frac{1}{\tilde{m}}\sum_{i=1}^{\tilde{m}}\va_{i,\perp}^{\T}\vz_k^{\perp}\va_{i,\perp}\right\Vert &\leq \frac{1}{1-\epsilon}\sup_{\vy\in\mathcal{N}}\left|\frac{1}{\tilde{m}}\sum_{i=1}^{\tilde{m}}\va_{i,\perp}^{\T}\vz_k^{\perp}\va_{i,\perp}^{\T}\vy-{\vz_k^{\perp}}^{\T}\vy\right\vert \\
        & \le \frac{c}{4\log \tilde{m}}\norms{\vz_k^{\perp}}
    \end{align*}
    for \(\epsilon = 1/4\) with probability at least \(1 - 9^n \cdot 2\exp(-\hat{c} n \log \tilde{m}) \ge 1 - O(\tilde{m}^{-20})\). 
    In addition, through similar analysis, one can obtain that 
    \[\abs{J_{32}}  = \left\|\frac{1}{\tilde{m}}\sum_{i=1}^{\tilde{m}}\va_{i,1}\va_{i,\perp}\right \Vert|\vz_k^{||}\vert \le \frac{1}{1-\epsilon} \sup_{\vy\in\mathcal{N}}\left| \frac{1}{\tilde{m}}\sum_{i=1}^{\tilde{m}}\va_{i,1}\va_{i,\perp}^{\T}\vy\right\vert \cdot |\vz_k^{||}\vert \le \frac{c}{4\log \tilde{m}} \norms{\vz_k^{\perp}}\]
    with probability at least \(1 - O(m^{-20})\).  
    Therefore, it holds 
    \[\norms{J_3} \le \norms{J_{31}} + \norms{J_{32}} \le \frac{c}{2\log \tilde{m}} \norms{\vz_k^{\perp}} \]
    with the same probability. 
    
    \item For $J_4 $, similarly with $J_2$, if $\tilde{m}\geq Cn\log n\log^3{\tilde{m}}$, we obtain
    \begin{align*}
        \|J_4\Vert &=\left\|\frac{1}{\tilde{m}}\sum_{i=1}^{\tilde{m}} |\va_i^{\T}\vx\vert \sigma(\va_i^{\T}\vz_k)\va_{i,\perp}-\frac{2\sin{\theta_k}}{\pi}\frac{\vz_k^{\perp}}{\|\vz_k\Vert}\right\Vert  \\
                &\lesssim \frac{1}{\log \tilde{m}}\frac{1}{\sqrt{n\log n}} \le \frac{c}{2\log \tilde{m}} \|\vz_k^{\perp}\Vert
    \end{align*}
    with probability at least $1-{O}(e^{-c_4n})-{O}({\tilde{m}}^{-20})$.
\end{itemize}
Putting the previous bounds together yields \begin{align*}
	\abs{r_1(\vz_k)} &\le \abs{J_{1}} + \abs{J_{2}} \le \frac{c}{\log \tilde{m}} \abs{\vz_k^{||}}, \\
	\norms{r_\perp(\vz_k)} &\le \norms{J_{3}} + \norms{J_{4}} \le \frac{c}{\log \tilde{m}} \| \vz_k^{\perp}\Vert, 
\end{align*}
for a sufficiently small constant \(c>0\). Recalling \eqref{zk+1}, it holds \begin{subequations}\label{iterative of alpha beta}
	\begin{align}
		\alpha_{k+1}&=\left[1+\mu\left(-1+\frac{2}{\pi}\frac{\beta_k}{\alpha_k^2+\beta_k^2}+\zeta_k\right)\right]\alpha_k+\frac{2\mu}{\pi}\arcsin{\frac{\alpha_k}{\sqrt{\alpha_k^2+\beta_k^2}}}, \\
		\beta_{k+1}&=\left[1+\mu\left(-1+\frac{2}{\pi}\frac{\beta_k}{\alpha_k^2+\beta_k^2}+\rho_k\right)\right]\beta_k,  
	\end{align}
\end{subequations}
for some \(\abs{\zeta_k} \le c/\log \tilde{m}\) and \(\abs{\rho_k} \le c/\log \tilde{m}\) with probability at least \(1 - O(\exp(-\hat{c} n)) - O({\tilde{m}}^{-20})\). 
     
We next turn attention to the second conclusion \eqref{omega} of Lemma \ref{lemma:bound}. Notice that $\omega_k:=\arctan\xkh{\alpha_k/\beta_k}$. It then follows from (\ref{iterative of alpha beta})  that 
    \begin{subequations}
        \begin{align*}
            \alpha_{k+1} &= (1-\mu+\mu\zeta_k)\alpha_k+\frac{2}{\pi}\mu\frac{\tan{\omega_k}}{1+\tan^2{\omega_k}}+\frac{2}{\pi}\mu\omega_k, \\
            \beta_{k+1}  &= (1-\mu+\mu\rho_k)\beta_k+\frac{2}{\pi}\mu\frac{1}{1+\tan^2{\omega_k}}.
        \end{align*}
    \end{subequations}
    Noting that $\tan{\omega_{k+1}}=\frac{\alpha_{k+1}}{\beta_{k+1}}$, we derive
    \begin{align*}
        \tan{\omega_{k+1}}&=\frac{(1-\mu+\mu\rho_k)\alpha_k+\mu(\zeta_k-\rho_k)\alpha_k+\frac{2}{\pi}\mu\frac{\tan{\omega_k}}{1+\tan^2{\omega_k}}+\frac{2}{\pi}\mu\omega_k}{(1-\mu+\mu\rho_k)\beta_k+\frac{2}{\pi}\mu\frac{1}{1+\tan^2{\omega_k}}} \\
        &=\left(1+\frac{\mu(\zeta_k-\rho_k)r_k\sin{\omega_k}+\frac{2}{\pi}\mu {\omega_k}}{(1-\mu+\mu\rho_k)r_k\sin{\omega_k}+\frac{2}{\pi}\mu\sin{\omega_k}\cos{\omega_k}}\right)\tan{\omega_k} \\
        &\overset{\mbox{(i)}}{\geq} \left(1+\frac{\mu(\zeta_k-\rho_k)r_k\sin{\omega_k}+\frac{2}{\pi}\mu\sin{\omega_k}}{(1-\mu+\mu\rho_k)r_k\sin{\omega_k}+\frac{2}{\pi}\mu\sin{\omega_k}\cos{\omega_k}}\right)\tan{\omega_k}\\
        &\overset{\mbox{(ii)}}{\geq} \left(1+\frac{- 2 \mu (\abs{\zeta_k} + \abs{\rho_k})+\frac{2}{\pi}\mu}{2(1-\mu+\mu \abs{\rho_k})+\frac{2}{\pi}\mu}\right)\tan{\omega_k} \overset{\mbox{(iii)}}{\geq} \left(1+\frac{1}{4}\mu\right)\tan{\omega_k}.
    \end{align*}
    In inequality (i), we utilize $\omega_k\geq \sin{\omega_k}$ with \(\omega \ge 0\). Inequality (ii) comes from \(\cos \omega_k \le 1\) and the induction hypothesis (\ref{induction hypothesis}). In inequality (iii), we use the fact that 
    \[\frac{-(\abs{\zeta_k} + \abs{\rho_k})+\frac{1}{\pi}}{(1-\mu+\mu \abs{\rho_k})+\frac{1}{\pi}\mu}\geq\frac{1}{4}\]
    as long as \(\abs{\zeta_k}, \abs{\rho_k} \le c/\log \tilde{m} \ll 1/30\) for some sufficiently small constant \(c>0\).

\section{Proof of Lemma \ref{bound of r_k}}    
\label{proof for bound of r_k}
We first prove that $\norms{\vz_{k+1}}\le 2$.
For convenience, denote  
\[H(\alpha_k,\beta_k) := \frac{\alpha_k\beta_k}{\alpha_k^2+\beta_k^2}+\arcsin{\frac{\alpha_k}{\sqrt{\alpha_k^2+\beta_k^2}}} \stackrel{\mbox{(i)}}{=} \frac{1}{2}\sin{2\omega_k}+\omega_k \stackrel{\mbox{(ii)}}{\le} \frac{\pi}{2}\]
where (i) comes from the definition \(\omega_k = \arctan (\alpha_k/\beta_k)\), and (ii) holds since $\omega_k\in[0,\frac{\pi}{2}]$. 
Therefore, recalling (\ref{alpha beta approximate:a}), one has
    \begin{align*}
        \alpha_{k+1}&=(1-\mu+\mu\zeta_k)\alpha_k + \frac{2}{\pi}\mu H(\alpha_k,\beta_k) \leq (1-\mu+\mu \abs{\zeta_k})\alpha_k+\mu,
    \end{align*}
where \(\abs{\zeta_k}\le \frac{c}{\log \tilde{m}} \ll \frac{1}{30}\) for a sufficiently small constant \(c>0\) due to Lemma \ref{lemma:bound}. 
As long as \(\alpha_k  < \frac{1}{1 - \abs{\zeta_k}}\), it holds \begin{equation}\label{eq:alpha_k+1}
    \alpha_{k+1} < \frac{1}{1 - \abs{\zeta_k}} \le \frac{6}{5}.
\end{equation} 
For $\beta_k$, with the same approach, it follows from (\ref{alpha beta approximate:b}) that  
    \begin{align*}
        \beta_{k+1}=(1-\mu+\mu\rho_k)\beta_k+\frac{2}{\pi}\mu\frac{\beta_k^2}{\alpha_k^2+\beta_k^2}\leq (1-\mu+\mu\abs{\rho_k})\beta_k+\frac{2}{\pi}\mu.
    \end{align*}
where \(\abs{\rho_k}\le \frac{c}{\log \tilde{m}} \ll \frac{1}{30}\) for a sufficiently small constant \(c>0\) due to Lemma \ref{lemma:bound}. 
Then we consider three cases for \(\beta_k\). 
\begin{itemize}
	\item[-] If $\beta_k < \frac{2}{\pi(1-\abs{\rho_k})}$, then \(\beta_{k+1} \le \frac{2}{\pi(1-\abs{\rho_k})} \le 1\).  
	
	\item[-] If $\beta_k >  \abs{\rho_k}+\frac{2}{\pi}$, we have \begin{align*}
		\beta_{k+1} \le (1-\mu+\mu\abs{\rho_k})\beta_k+\frac{2}{\pi}\mu = \beta_k+\mu\left(-\beta_k + \abs{\rho_k} + \frac{2}{\pi}\right) \le \beta_k.
	\end{align*} 

	\item[-] If $\frac{2}{\pi(1-\abs{\rho_k})} \le \beta_k \le  \abs{\rho_k}+\frac{2}{\pi}$, one has \begin{align*}
		\beta_{k+1} \le& \beta_k+\mu\left(-\beta_k +  \abs{\rho_k} + \frac{2}{\pi}\right) = (1-\mu)\beta_k+\mu\left( \abs{\rho_k} + \frac{2}{\pi}\right) \\
        \le&  \abs{\rho_k} + \frac{2}{\pi}  \le 1.
	\end{align*} 
\end{itemize}
The fact \(\beta_0 \le \frac{7}{6}\) combined with the three cases reveals that \(\beta_{k+1} \le \frac{3}{2}\).  Therefore, it holds  
\[r_{k+1}^2 = \alpha_{k+1}^2+\beta_{k+1}^2\leq 4,  \]
namely \begin{equation}\label{result:upper}
	\|\vz_{k+1}\Vert \le 2.  
\end{equation}

Next, we prove $ \|\vz_{k+1}^{\perp}\Vert \ge c_{l}$. To this end, it is equivalent to prove $\beta_{k+1}\gtrsim 1$. When $k\leq T_{\gamma,2}$, by the definition of $T_{\gamma,2}$ one has $\beta_k\geq \frac{1}{2}\gamma$. When $T_{\gamma,2}<k\leq T_{\gamma}$, the iterate  (\ref{alpha beta approximate:b}) implies that 

\begin{equation*}
    \beta_{k+1}\geq (1-1.1\mu)\beta_k
\end{equation*}
as long as $ |\rho_k\vert\leq 0.1\mu$ and $\mu\le0.9
$. Then for any fixed $T_{\gamma,2}<k\leq T_{\gamma}$, it holds
\begin{equation}
    \beta_{k}\geq (1-1.1\mu)^{k-T_{\gamma,2}}\beta_{\gamma,2}\geq (1-1.1\mu)^{T_{\gamma}-T_{\gamma,2}}\beta_{\gamma,2}\geq \frac{\gamma}{2}(1-1.1\mu)^{T_{\gamma}-T_{\gamma,2}}.
\end{equation}
As long as $T_\gamma-T_{\gamma,2}\lesssim \frac{1}{\mu}$, one has $\beta_k\gtrsim 1$. Combined with (\ref{omega}), we obtain

\begin{equation}
    \left(1+\frac{1}{4}\mu\right)^{T_{\omega}-T_{\gamma,2}}\tan{\omega_{T_{\gamma,2}}}\leq\tan{\omega_{T_{\omega}}}. 
    \label{1/mu:1}
\end{equation}
Moreover, lemma \ref{bound of r_k} and the definition of $T_{\omega}$ give that 
\begin{equation}
    \frac{1}{2}\leq\frac{1}{\beta_{T_{\gamma,2}}} \quad \mbox{and} \quad \tan{\omega_{T_{\omega}}} < \tan{\left(\frac{\pi}{2}-\frac{1}{4}\gamma\right)}.
    \label{1/mu:2}
\end{equation}
Apply (\ref{1/mu:1}) and (\ref{1/mu:2}) to derive that
\begin{equation}
    \frac{1}{2}\left(1+\frac{1}{4}\mu\right)^{T_{\omega}-T_{\gamma,2}}\alpha_{T_{\gamma,2}}\leq\tan{\left(\frac{\pi}{2}-\frac{1}{4}\gamma\right)}.
    \label{1/mu:3}
\end{equation}
If $\alpha_{T_{\gamma,2}}\gtrsim 1$ holds, (\ref{1/mu:3}) reveals that $T_{\omega}-T_{\gamma,2}\lesssim \frac{1}{\mu} $. Therefore, combined with (\ref{eq:TminuTgam}) it follows 
\begin{equation*}
    T_{\gamma}-T_{\gamma,2}=(T_{\gamma}-T_{\omega})+(T_{\omega}-T_{\gamma,2})\lesssim \frac{1}{\mu}.
\end{equation*} 

    In the following, our aim is to proof $\alpha_{T_{\gamma,2}}\gtrsim1$. Let $\gamma > 0$ be some sufficiently small constant, and $\delta > 0$ be some  small constant. 

    In the following analysis, we partition Phase 1 (iterations up to $T_{\gamma}$) into several sub-stages. Figure 4 demonstrates these sub-stages through numerical experiments conducted with parameters $n = 1200$, $ m = 12n$, $\delta=0.2$, $\mu = 0.5$, and $\gamma = 0.1$. The plot reveals intricate evolution patterns of both $\alpha_k$ and $\beta_k$, which necessitated the identification of key temporal landmarks (marked by dashed vertical lines) to facilitate the proof.

\begin{figure}[htbp]
    \centering
    \includegraphics[width=0.5\linewidth]{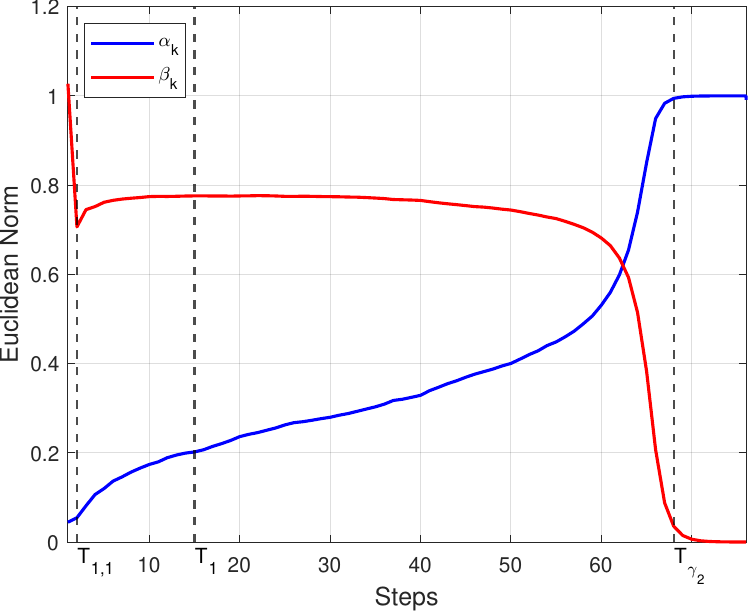}
    \caption{Illustration of the sub-stages for stage 1.}
    \label{fig:phases}
\end{figure} 

\begin{itemize}
    \item \textbf{Stage 1.1:} investigate the iterations $0\leq t\leq T_{1,1}$ with 
\begin{equation}
    T_{1,1}=\min\left\{t:\beta_{t+1}\leq\frac{3}{4}\right\}.
\end{equation}
\begin{fact}
    For $0\leq t\leq T_{1,1}$, it can be concluded that
\begin{subequations}\label{t:0-T_1,1}
\begin{align}
        \label{t:0-T_1,1:1}\beta_{T_{1,1}+1}&\leq\frac{3}{4}<\beta_{t}, \\
        \label{t:0-T_1,1:2}\beta_{t+1}&\leq \left(1-\frac{1}{16}\mu\right)\beta_t, \\
        \label{t:0-T_1,1:3}T_{1,1}&\lesssim\frac{1}{\mu}, \\
        \label{t:0-T_1,1:4}\alpha_t&\leq\delta, \\
        \label{t:0-T_1,1:5}\alpha_{t+1}&\geq\left(1+\frac{1}{20}\mu\right)\alpha_t, \\
        \label{t:0-T_1,1:6}\beta_{T_{1,1}+1}&\geq\frac{3}{4}\left(1-1.1\mu\right).  
    \end{align}
\end{subequations}
\end{fact}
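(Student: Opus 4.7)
The plan is to establish all six claims via a coupled induction on $t$, based on the state evolution \eqref{alpha beta approximate} with the small residuals $|\zeta_t|,|\rho_t|\le c/\log\tilde{m}$ from Lemma~\ref{lemma:bound}. The induction hypothesis carried at step $t$ is $\alpha_\tau\le\delta$ together with $\beta_\tau\le\beta_0\le 1+1/\log n$ for all $\tau\le t$; the base case $t=0$ holds by the initialization \eqref{initial} and standard Gaussian concentration of $[\vz_0]_1\sim\mathcal{N}(0,1/n)$, which gives $\alpha_0\lesssim\sqrt{\log n/n}\ll\delta$.

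On the $\beta$-side, the definition of $T_{1,1}$ forces $\beta_t>3/4$ for all $t\le T_{1,1}$, so $\tfrac{2}{\pi}\cdot\tfrac{\beta_t}{\alpha_t^2+\beta_t^2}\le\tfrac{2}{\pi\beta_t}<\tfrac{8}{3\pi}<\tfrac{15}{16}$. Plugging this into \eqref{alpha beta approximate:b} and absorbing the tiny perturbation $\rho_t$ yields the geometric contraction \eqref{t:0-T_1,1:2}, which in turn gives the monotone decrease \eqref{t:0-T_1,1:1} and the length bound \eqref{t:0-T_1,1:3} via $\beta_t\le(1-\mu/16)^t\beta_0$. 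For the one-step lower bound \eqref{t:0-T_1,1:6}, I would simply discard the nonnegative term $\tfrac{2}{\pi}\cdot\tfrac{\beta_t}{\alpha_t^2+\beta_t^2}$ in \eqref{alpha beta approximate:b} to obtain $\beta_{t+1}\ge(1-1.1\mu)\beta_t$ after absorbing $|\rho_t|$, and then use $\beta_{T_{1,1}}>3/4$.

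On the $\alpha$-side, rewrite \eqref{alpha beta approximate:a} in the angle variable $\omega_t=\arctan(\alpha_t/\beta_t)$: the nonlinear term becomes $\tfrac12\sin 2\omega_t+\omega_t$, sandwiched between $\sin\omega_t(1+\cos\omega_t)$ from below and $2\omega_t\le 2\alpha_t/\beta_t$ from above. The induction hypothesis yields $r_t\le 1+2/\log n+O(\delta^2)$, hence $\tfrac{4}{\pi r_t}\ge 1.2$, which drives the explicit growth rate $\alpha_{t+1}\ge(1+\mu/20)\alpha_t$ claimed in \eqref{t:0-T_1,1:5}. The matching upper estimate $\alpha_{t+1}\le(1+C\mu)\alpha_t$ for a moderate constant $C$, combined with \eqref{t:0-T_1,1:3}, gives $\alpha_{T_{1,1}}\le e^{O(1)}\alpha_0\lesssim\sqrt{\log n/n}\ll\delta$ for $n$ large, closing the induction and establishing \eqref{t:0-T_1,1:4}.

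The main obstacle is keeping the coupled induction clean with the explicit numerical constants $\tfrac{1}{16}$, $\tfrac{1}{20}$, and $1.1$: each inequality leans on both $\alpha_t$ and $\beta_t$ lying in the expected intervals, so the reasoning has to propagate the two hypotheses together. The resolution is that the windows have enough slack that the tiny perturbations $\zeta_t,\rho_t\sim 1/\log\tilde{m}$ and the $O(\delta^2)$, $O(1/\log n)$ lower-order terms can all be comfortably absorbed, provided $n,\tilde{m}$ are sufficiently large and $\delta$ is chosen sufficiently small; without this slack, the explicit contraction rate $1/16$ and growth rate $1/20$ cannot be separated from the noise.
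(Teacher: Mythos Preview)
Your proposal is correct and matches the paper's argument step by step: the $\beta$-contraction from $\beta_t>3/4$, the upper bound $\alpha_{t+1}\le(1+C\mu)\alpha_t$ derived using only $\beta_t\ge 3/4$, and the $\alpha$-growth via the quantity $(r_t+\beta_t)/r_t^2$ (your angle-variable rewriting is equivalent to the paper's $F(\alpha_t,\beta_t)=\frac{\beta_t+\sqrt{\alpha_t^2+\beta_t^2}}{\alpha_t^2+\beta_t^2}\ge 7/4$). One remark: the ``coupled induction'' framing is unnecessary and slightly misleading, since the step $\alpha_{t+1}\le\delta$ cannot close from $\alpha_t\le\delta$ alone (as $\alpha$ grows); in substance your argument --- like the paper's --- is sequential: first establish \eqref{t:0-T_1,1:1}--\eqref{t:0-T_1,1:3} and the $\alpha$-upper bound from $\beta_t>3/4$ alone, deduce \eqref{t:0-T_1,1:4} via the global time bound $T_{1,1}\lesssim 1/\mu$, and only then invoke $\alpha_t\le\delta$ together with $\beta_t\le\beta_0$ for \eqref{t:0-T_1,1:5}.
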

A detailed proof of Fact 1 is provided in section \ref{proof:fact 1}. Fact 1 implies that in this substage, $\alpha_t$ keeps increasing and $\beta_t$ is decreasing. Additionally, this illustrates that $\beta_t$ drops to $\frac{3}{4}$ firstly and then $\alpha_t$ increase to $\delta$ secondly. In other words, $\alpha_t\leq\delta$ for $t=0,\ldots,T_{1,1}$. 
\item \textbf{Stage 1.2:} explore the iterations $T_{1,1}< t\leq T_1$ with 
\begin{equation}\label{def:T1}
    T_1=\min\{t:\alpha_{t+1}\geq \delta\}.
\end{equation}
\begin{fact}
    For $T_{1,1}< t\leq T_1$, it holds that
\begin{subequations}\label{t:T_1,1-_T_1}
\begin{align}
        \label{t:T_1,1-_T_1:1}\alpha_{t} &< \delta\leq\alpha_{T_1+1}, \\
        \label{t:T_1,1-_T_1:2}\frac{1}{3}\, &\le \beta_t \le \frac{3}{4}, \\  
        \label{t:T_1,1-_T_1:3}\alpha_{t+1} &\ge \left(1+\frac{1}{40}\mu\right)\alpha_t,\\
        \label{t:T_1,1-_T_1:4}T_1 &\lesssim\frac{\log n}{\mu}, \\
        \label{t:T_1,1-_T_1:5}\alpha_{T_1+1} &\leq(1 + 3\mu)\alpha_{T_1}.  
    \end{align}
\end{subequations}

\end{fact}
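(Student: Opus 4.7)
The plan is to establish (a)--(e) by a joint induction on $t$ from $T_{1,1}+1$ up to $T_1$, using the state evolution \eqref{iterative of alpha beta} and the endpoint conclusions of Fact 1 as the initial data. Part (a) is immediate from the definition \eqref{def:T1} of $T_1$, so the work concentrates on (b), (c), and (e).

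I would first handle (b), which is the dynamical heart of the stage. Viewing the $\beta$-recursion with $\alpha\in[0,\delta]$ held fixed as a one-dimensional map, a simple quadratic gives a stable attractor $\beta^{\ast}(\alpha)$ satisfying $\beta^{\ast}(0)=2/\pi$ and, for $\delta$ small enough, $\beta^{\ast}(\alpha)\in(1/3,3/4)$ for every $\alpha\in[0,\delta]$. Fact 1 provides the initial bracket $\tfrac{3}{4}(1-1.1\mu)\le\beta_{T_{1,1}+1}\le 3/4$. I would then argue that under the inductive hypothesis $\alpha_t\le\delta$, $\beta_t\in[1/3,3/4]$, the multiplicative factor in the $\beta$-recursion lies below $1$ whenever $\beta_t>\beta^{\ast}(\alpha_t)$ and above $1$ otherwise, so $\beta_{t+1}$ moves monotonically toward $\beta^{\ast}(\alpha_t)$; since $|\beta_{t+1}-\beta_t|=O(\mu)$ for sufficiently small $\mu$, no overshoot past either endpoint of $[1/3,3/4]$ is possible, completing the induction for (b).

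Next I would establish (c) and (d). Substituting $\alpha_t\le\delta$ and $\beta_t\in[1/3,3/4]$ into \eqref{iterative of alpha beta} and using the elementary bound $\omega_t=\arctan(\alpha_t/\beta_t)\ge\sin\omega_t=\alpha_t/r_t$, the two constructive contributions $\tfrac{2\mu\beta_t}{\pi r_t^2}$ and $\tfrac{2\mu\omega_t}{\pi\alpha_t}$ together dominate $\tfrac{2\mu}{\pi r_t}\bigl(\tfrac{\beta_t}{r_t}+1\bigr)$, which for $\delta$ small is a constant comfortably above $1+1/40$; after subtracting the leading $\mu$ and the negligible $\mu|\zeta_t|$ term, this yields the geometric growth $\alpha_{t+1}\ge(1+\mu/40)\alpha_t$ asserted in (c). Iterating from the inherited lower bound $\alpha_{T_{1,1}+1}\gtrsim 1/\sqrt{n\log n}$ (part (e) of Fact 1 ensures $\alpha$ is nondecreasing across Stage 1.1), the requirement $\alpha_{T_1}<\delta$ forces $T_1-T_{1,1}\lesssim\log(\delta\sqrt{n\log n})/\log(1+\mu/40)\lesssim\log n/\mu$, and combined with $T_{1,1}\lesssim 1/\mu$ from Fact 1 this gives (d).

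Finally, (e) follows by upper-bounding each term of the $\alpha$-recursion at $t=T_1$: using $\arctan(x)\le x$ gives $\omega_{T_1}/\alpha_{T_1}\le 1/\beta_{T_1}\le 3$, and dropping $\alpha_{T_1}^2$ from the denominator gives $\beta_{T_1}/r_{T_1}^2\le 1/\beta_{T_1}\le 3$, whence $\alpha_{T_1+1}/\alpha_{T_1}\le 1+(\tfrac{12}{\pi}-1)\mu+\mu|\zeta_{T_1}|\le 1+3\mu$ once $|\zeta_{T_1}|$ is small. The main obstacle I anticipate is (b): a naive per-step drift of $O(\mu)$ accumulated over the $O(\log n/\mu)$ iterations of Stage 1.2 would allow $\beta_t$ to wander by $O(\log n)$ and exit $[1/3,3/4]$. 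Ruling this out requires exploiting the contraction of the $\beta$-dynamics around its attractor in a quantitative way, while keeping track of the slowly drifting $\beta^{\ast}(\alpha_t)$ and the perturbation $\rho_k$; this is where the argument will demand the most care.
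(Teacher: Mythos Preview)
Your plan is correct, and for parts (a), (d), (e) it matches the paper essentially verbatim. The genuine differences are in (b) and (c).

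For (b), you frame the argument via an attractor $\beta^\ast(\alpha)\approx 2/\pi$ and worry that controlling $\beta_t$ over $O(\log n/\mu)$ steps will require a quantitative contraction estimate. The paper sidesteps this entirely: it simply splits into the two cases $\beta_t\in[\tfrac12,\tfrac34]$ and $\beta_t\in[\tfrac13,\tfrac12]$ and verifies, by plugging the endpoint bounds and $\alpha_t\le\delta$ into \eqref{alpha beta approximate:b}, that $\beta_{t+1}\in[\tfrac13,\tfrac34]$ in both cases. This is a one-step invariance check that works for all $\mu\le\tfrac12$ and needs no contraction rate, no tracking of a drifting fixed point, and no smallness of $\mu$; your anticipated ``main obstacle'' dissolves. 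Your attractor picture is the right intuition, but the paper's two-case computation is what actually closes the argument cleanly.

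For (c), you bound the $\alpha$-recursion directly (in the spirit of the paper's proof of Fact~1(e), via the function $F(\alpha,\beta)=(\beta+r)/r^2$). The paper instead reuses the angular growth \eqref{omega}, i.e.\ $\tan\omega_{k+1}\ge(1+\tfrac{\mu}{4})\tan\omega_k$, and couples it with a one-line lower bound $\beta_{t+1}\ge(1-\tfrac{\mu}{5})\beta_t$ (from $\tfrac{\beta_t}{\alpha_t^2+\beta_t^2}\ge\tfrac{55}{42}$ on this range) to get $\alpha_{t+1}\ge(1+\tfrac{\mu}{4})(1-\tfrac{\mu}{5})\alpha_t\ge(1+\tfrac{\mu}{40})\alpha_t$. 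Both routes work; yours is self-contained, while the paper's is shorter because it leverages Lemma~\ref{lemma:bound} a second time.
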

The detailed proof of Fact 2 is provided in Section \ref{proof:fact 2}. Importantly, Fact 2 demonstrates that during this phase $\beta_t$ maintains a lower bound  on \(O(1)\)  while $\alpha_t$ exhibits monotonic growth.
\item \textbf{Stage 1.3:} We define this substage as containing all iterations $t$ such that $ T_1< t \leq T_{\gamma,2}$ with 
\begin{equation}\label{def:t gamma 2}
    T_{\gamma,2}=\min\left\{t:\beta_{t+1}\leq\frac{1}{2}\gamma\right\}.
\end{equation}
\begin{fact}
When $ T_1< t \leq T_{\gamma,2}$, one has  
\begin{subequations}\label{t:T_1-T_gamma,2}
\begin{align}
        \label{t:T_1-T_gamma,2:1}T_{\gamma,2}-T_1&\lesssim \frac{1}{\mu}, \\
        \label{t:T_1-T_gamma,2:2}\beta_{t}&\gtrsim 1, \\
        \label{t:T_1-T_gamma,2:3}T_{\gamma,2}&\lesssim\frac{\log n}{\mu}, \\
        \label{t:T_1-T_gamma,2:4}\alpha_{T_{\gamma,2}}&\gtrsim 1 .   
\end{align}
\end{subequations}
\end{fact}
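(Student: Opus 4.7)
From Fact 2 the algorithm enters Stage 1.3 with $\alpha_{T_1+1} \in [\delta, 2\delta]$ (using \eqref{t:T_1,1-_T_1:1} and \eqref{t:T_1,1-_T_1:5}) and $\beta_{T_1+1} \in [1/3, 3/4]$ (from \eqref{t:T_1,1-_T_1:2}). My plan is to run the approximate state evolution \eqref{alpha beta approximate} inductively, absorbing the nuisance terms $|\zeta_t|, |\rho_t| = O(1/\log\tilde{m})$ into constant margins throughout. Claim \eqref{t:T_1-T_gamma,2:2} is immediate from the definition \eqref{def:t gamma 2} of $T_{\gamma,2}$: for $T_1 < t \le T_{\gamma,2}$ one must have $\beta_t > \gamma/2$, and since $\gamma$ is a universal constant, $\beta_t \gtrsim 1$.

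\textbf{Contraction of $\beta_t$ and the time bounds.} I would then establish a geometric contraction $\beta_{t+1} \le (1 - c_1\mu)\beta_t$ throughout Stage 1.3, for some universal $c_1 > 0$. Under an inductive hypothesis $\alpha_t \ge \delta/2$ together with the upper bound $\beta_t \le 3/4$, the multiplier $\frac{2}{\pi}\frac{\beta_t}{\alpha_t^2+\beta_t^2}$ in \eqref{alpha beta approximate:b} is at most $\frac{8}{3\pi} \approx 0.85 < 1$ (worst case at small $\alpha$ with $\beta$ near $3/4$); the perturbation $|\rho_t| = o(1)$ is absorbed. Iterating this contraction from $\beta_{T_1+1} \le 3/4$ down to $\beta_{T_{\gamma,2}+1} \le \gamma/2$ takes at most $O(\log(1/\gamma)/\mu) = O(1/\mu)$ steps, which is claim \eqref{t:T_1-T_gamma,2:1}; combined with Fact 2's $T_1 \lesssim \log n/\mu$ this gives \eqref{t:T_1-T_gamma,2:3}.

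\textbf{Growth of $\alpha_t$ and main obstacle.} The delicate claim \eqref{t:T_1-T_gamma,2:4} requires showing $\alpha_t$ grows to order one. I would rewrite \eqref{alpha beta approximate:a} in terms of $\omega_t = \arctan(\alpha_t/\beta_t)$ as
\[\alpha_{t+1} = \Bigl[1 + \mu\Bigl(\tfrac{2}{\pi}\bigl(\cos\omega_t + \tfrac{\omega_t}{\sin\omega_t}\bigr) - 1 - \zeta_t\Bigr)\Bigr]\alpha_t,\]
and observe that an elementary computation shows $\tfrac{2}{\pi}(\cos\omega + \omega/\sin\omega) > 1$ strictly for all $\omega \in [0, \pi/2)$, with a uniform gap $1 + c_2$ on any compact subinterval. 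Since Lemma \ref{bound of r_k} gives $\alpha_t \le 2$ and $\beta_t \ge \gamma/2$ (hence $\tan\omega_t \le 4/\gamma$), the angle $\omega_t$ stays bounded away from $\pi/2$, so $\alpha_{t+1} \ge (1 + c_2\mu)\alpha_t$ for some universal $c_2 > 0$. Iterating over $T_{\gamma,2} - T_1 \lesssim 1/\mu$ steps yields $\alpha_{T_{\gamma,2}} \gtrsim \delta \gtrsim 1$, establishing \eqref{t:T_1-T_gamma,2:4}. The principal obstacle is the coupling between the two dynamics: the $\beta$-contraction in the previous paragraph relies on an inductive lower bound $\alpha_t \ge \delta/2$ that itself must be maintained step by step, and this must be verified simultaneously through the intermediate regime $\alpha_t \sim \beta_t$, mirroring the careful case-analysis already used in Facts 1 and 2.
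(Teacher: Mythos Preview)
Your argument has a genuine gap in the second paragraph. The claimed contraction $\beta_{t+1}\le(1-c_1\mu)\beta_t$ does not follow from the hypotheses $\alpha_t\ge\delta/2$ and $\beta_t\le 3/4$. The function $\beta\mapsto\beta/(\alpha^2+\beta^2)$ is maximized at $\beta=\alpha$, not at $\beta=3/4$; under your assumptions the correct upper bound is $\tfrac{2}{\pi}\cdot\tfrac{1}{2\alpha_t}\le\tfrac{2}{\pi\delta}$, which exceeds $1$ for small $\delta$. Concretely, once $\beta_t$ has shrunk to the order of $\alpha_t\approx\delta$, the bracket in \eqref{alpha beta approximate:b} becomes positive and $\beta_t$ can \emph{increase}. (This is exactly the phenomenon already visible in the proof of Fact~2, where $\beta_t\in[\tfrac13,\tfrac12]$ with small $\alpha_t$ gives $\beta_{t+1}\ge(1+\tfrac{1}{16}\mu)\beta_t$.) So you cannot obtain the time bound \eqref{t:T_1-T_gamma,2:1} by iterating a $\beta$-contraction. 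Your $\alpha$-growth formula in the third paragraph also drops a factor $1/r_t=1/\|\vz_t\|$: the correct multiplier is $\tfrac{2}{\pi r_t}(\cos\omega_t+\omega_t/\sin\omega_t)$, and with $r_t$ ranging up to $2$ this need not exceed $1$, so monotone growth of $\alpha_t$ is not established either. The ``principal obstacle'' you flag is therefore not a detail to be filled in but the crux of the matter, and the decoupled estimates you propose do not survive it.

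The paper avoids tracking $\alpha_t$ and $\beta_t$ separately and instead leans entirely on the angle invariant \eqref{omega}, $\tan\omega_{t+1}\ge(1+\tfrac14\mu)\tan\omega_t$, which holds unconditionally under the induction hypothesis. Since $\alpha_{T_1}/\beta_{T_1}\gtrsim\delta$ (Fact~2) while $\alpha_{T_{\gamma,2}}/\beta_{T_{\gamma,2}}\le 4/\gamma$ (from $\alpha_t\le 2$ and $\beta_{T_{\gamma,2}}>\gamma/2$), geometric growth of the ratio forces $T_{\gamma,2}-T_1\lesssim 1/\mu$ directly, with no need for $\beta$ to contract. The lower bound $\beta_t\gtrsim 1$ then follows either from your (correct) observation that $\beta_t>\gamma/2$ by definition, or from the crude one-sided estimate $\beta_{t+1}\ge(1-1.1\mu)\beta_t$ iterated over $O(1/\mu)$ steps; and finally $\alpha_{T_{\gamma,2}}=(\alpha_{T_{\gamma,2}}/\beta_{T_{\gamma,2}})\cdot\beta_{T_{\gamma,2}}\ge(\alpha_{T_1+1}/\beta_{T_1+1})\cdot\beta_{T_{\gamma,2}}\gtrsim\delta$. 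The missing idea is to work with the ratio $\alpha_t/\beta_t$, whose monotonicity is robust, rather than with $\alpha_t$ and $\beta_t$ individually, whose monotonicity is not.
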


The proof of Fact 3 is presented in Section \ref{proof:fact 3}. Thus, we have accomplished our primary objective by demonstrating that at time $T_{\gamma,2}$, both $\alpha_{T_{\gamma,2}}$ and $\beta_{T_{\gamma,2}}$ possess constant lower bounds, as given in equations (\ref{t:T_1-T_gamma,2:2}) and (\ref{t:T_1-T_gamma,2:4}).

\end{itemize}

\subsection{Proof of Fact 1} \label{proof:fact 1}
The proof is structured as follows.  
\begin{itemize}
\item First of all, the definition \eqref{def:T1} gives that $\beta_{T_{1,1}+1}\leq\frac{3}{4}<\beta_{T_\gamma}. $
\item Additionally, the iterate  (\ref{alpha beta approximate:b})  implies that
    \begin{align*}
        \beta_{t+1} &= \left[1+\mu\left(-1+\frac{2}{\pi}\frac{\beta_t}{\alpha_t^2 + \beta_t^2} + \abs{\rho_t}\right)\right]\beta_t \\
        &\le \left[1+\mu\left(-1+\frac{2}{\pi}\frac{1}{\beta_t} + \abs{\rho_t}\right)\right]\beta_t \\
        &\le \xkh{1-\frac{1}{16}\mu}\beta_t
    \end{align*}
    where the last line follows from 
    \[-1+\frac{2}{\pi}\frac{1}{\beta_t} + \abs{\rho_t}\leq-\frac{1}{16}\]
    with the proviso that  $\abs{\rho_t} \ll \frac{1}{30}$ and $\beta_t \ge \frac{3}{4}$.  

\item  To prove (\ref{t:0-T_1,1:3}), from (\ref{t:0-T_1,1:2}) one knows that   
\begin{equation*}
    \frac{3}{4}<\beta_{T_{1,1}} \le \left(1-\frac{1}{16}\mu\right)^{T_{1,1}}\beta_0 \le \left(1-\frac{1}{16}\mu\right)^{T_{1,1}}.  
\end{equation*}
This further implies that 
\[T_{1,1} \lesssim \frac{\log \frac{4}{3}}{ - \log (1 - \frac{1}{16}\mu)} \lesssim \frac{1}{\mu} .\]

\item For (\ref{t:0-T_1,1:4}), the iterate (\ref{alpha beta approximate:a}) gives  
\begin{align*}
    \alpha_{t+1}&\overset{\mbox{(i)}}{\leq} \left[1+\mu\left(-1+\frac{2}{\pi}\frac{1}{\beta_t} + \abs{\zeta_t}\right)\right]\alpha_t  +\mu\frac{2}{\pi} \cdot \frac{\alpha_t}{\beta_t} \overset{\mbox{(ii)}}{\leq} \left(1+\frac{3}{4}\mu\right)\alpha_t
\end{align*}
where the inequality (i) utilizes the facts  
\[\arctan{x}\leq{x}, \quad \frac{\beta_t}{\alpha_t^2+\beta_t^2}\leq \frac{1}{\beta_t}, \quad\text{and} \quad \abs{\zeta_t} \ll \frac{1}{30},\]
and (ii) employs (\ref{t:0-T_1,1:1}).  
Combining this and (\ref{t:0-T_1,1:3}), we obtain 
\begin{equation*}
    \alpha_{T_{1,1}}\leq \left(1+\frac{3}{4}\mu\right)^{T_{1,1}}|\alpha_0\vert\leq \left(1+\frac{3}{4}\mu\right)^{{O}\left(\frac{1}{\mu}\right)}\frac{\log n}{\sqrt{n}} \ll \delta.
\end{equation*}

\item In addition, the induction hypothesis (\ref{alpha beta approximate:a}) indicates that
    \begin{align*}
        \alpha_{t+1} \overset{\mbox{(i)}}{\geq} & \left[1+\mu\left(-1+\frac{2}{\pi}\frac{\beta_t}{\alpha_t^2+\beta_t^2}+\zeta_t\right)\right]\alpha_t+\frac{2}{\pi}\mu\frac{\alpha_t}{\sqrt{\alpha_t^2+\beta_t^2}} \\
        = \, & \left[1+\mu\left(-1+\frac{2}{\pi}F(\alpha_t, \beta_t) - \abs{\zeta_t}\right)\right]\alpha_t \\
        \overset{\mbox{(ii)}}{\geq} & \left[1+\mu\left(-1+\frac{2}{\pi} \cdot \frac{7}{4} - \abs{\zeta_t}\right)\right]\alpha_t \overset{\mbox{(iii)}}{\geq} \left(1+\frac{1}{20}\mu\right)\alpha_t,
    \end{align*}
    where \[F(\alpha_t,\beta_t) := \frac{\beta_t+\sqrt{\alpha_t^2+\beta_t^2}}{\alpha_t^2+\beta_t^2}.  \]
    In inequality (i), we use $\arcsin{x}\geq x$ for \(x \ge 0\). 
    Inequality (ii) follows from   the fact   that 
    \[F(\alpha_t,\beta_t) \ge  F(\delta, \beta_t) \ge \frac{7}{4}\]
    for $\alpha_t \le \delta \le \beta_t \le 1.04$ and $\delta \le 0.3$.  
    And inequality (iii) arises from   $\abs{\zeta_t} \ll \frac{1}{30}$   due to Lemma \ref{lemma:bound}. (\ref{t:0-T_1,1:5}) also implies that $\alpha_t$ keeps increasing during this process.
\item Finally, combine  (\ref{alpha beta approximate:b}), $\alpha_{T_{1,1}}\leq\delta$ and $\beta_{T_{1,1}}>\frac{3}{4}$ to establish 
\begin{align*}
    \beta_{T_{1,1}+1}&\geq \left[1+\mu\left(-1+\frac{2}{\pi}\frac{\beta_{T_{1,1}}}{\alpha_{T_{1,1}}^2 + \beta_{T_{1,1}}^2} - \abs{\rho_{T_{1,1}}}\right)\right]\beta_{T_{1,1}} \geq \frac{3}{4}\left(1-1.1\mu\right)  
\end{align*}

due to \(\frac{\beta_{T_{1,1}}}{\alpha_{T_{1,1}}^2 + \beta_{T_{1,1}}^2} \ge 0\) and \(\abs{\rho_{T_{1,1}}} \ll \frac{1}{30}\). 
\end{itemize} 

\subsection{Proof of Fact 2} \label{proof:fact 2}
\begin{itemize}
    \item Firstly, we deduce from the definition of $T_1$ that $\alpha_{t}<\delta\leq\alpha_{T_1+1}$ for $T_{1,1}<t\leq T_{1}$. 
    \item Then we prove  (\ref{t:T_1,1-_T_1:2}). \eqref{t:0-T_1,1:6} and  \eqref{t:0-T_1,1:1}  reveal that 
    \[\frac{1}{3} \le \frac{3}{4} (1 - 1.1 \mu) \le \beta_{T_{1, 1} + 1} \le \frac{3}{4}\]
    as long as \(\mu \le \frac 12\). We now divide into two cases for \(\beta_{t}\) with \(T_{1, 1} < t \le T_{1}\).  
    \begin{itemize}
    	\item[-] If \(\beta_{t} \in [\frac 12, \frac 34]\),  then the iterate (\ref{alpha beta approximate:b}) gives that \begin{align*}
    		\beta_{t+1} &= \left[1 + \mu\left(-1 + \rho_t\right)\right]\beta_t + \frac{2\mu}{\pi} \cdot \frac{\beta_t^2}{\alpha_t^2+\beta_t^2} \\
    		& \le \left[1 + \mu\left(-1 + \abs{\rho_t}\right)\right]\beta_t + \frac{2\mu}{\pi}  \le \frac{3}{4}
    	\end{align*} 
    	where the first inequality follows from \(\frac{\beta_t^2}{\alpha_t^2+\beta_t^2} \le 1\), and the second inequality utilizes \(\mu \ge 0\) and \(\abs{\rho_t}\ll \frac{1}{30}\) by Lemma \ref{lemma:bound}.  On the other hand, one has \begin{align*}
    		\beta_{t+1} &= \left[1 + \mu\left(-1 + \rho_t\right)\right]\beta_t + \frac{2\mu}{\pi} \cdot \frac{\beta_t^2}{\alpha_t^2+\beta_t^2} \\
    		& \ge \left[1 + \mu\left(-1 - \abs{\rho_t}\right)\right]\beta_t + \frac{2\mu}{\pi} \cdot \frac{3}{5}  \ge \frac{1}{3}
    	\end{align*} 
    	due to the facts \(\mu \le 0.5\), \(\abs{\rho_t}\ll \frac{1}{30}\), and 
    	\[\frac{\beta_t^2}{\alpha_t^2+\beta_t^2} \ge \frac{0.5^2}{\delta^2+0.5^2} \ge \frac{3}{5}\]
    	with \(\beta_t \ge \frac{1}{2}\) and \(\alpha_t \le \delta < 0.4\).  
    	Therefore, we have shown that \(\beta_{t+1} \in [\frac 13, \frac 34]\).  
    	
    	\item[-] If \(\beta_{t} \in [\frac 13, \frac 12]\),  then the iterate (\ref{alpha beta approximate:b}) implies that   \begin{align*}
    		\beta_{t+1} &= \left[1+\mu\left(-1+\frac{2}{\pi}\frac{\beta_t}{\alpha_t^2 + \beta_t^2} - \rho_t \right)\right]\beta_t \\
    		&\le \left[1+\mu\left(-1+\frac{2}{\pi}\frac{1}{\beta_t} + \abs{\rho_t} \right)\right]\beta_t \\
    		&\le \left[1+\mu\left(-1+\frac{6}{\pi} + \frac{1}{30} \right)\right] \cdot \frac 12  \le  \frac{3}{4}  
    	\end{align*}
    	with the proviso that \(\frac 13 \le \beta_t \le \frac 12\) and \(\abs{\rho_t} \ll \frac{1}{30}\). 
    	Moreover,  it also follows from \eqref{alpha beta approximate:b} that   \begin{align*}
    		\beta_{t+1} =& \left[1+\mu\left(-1+\frac{2}{\pi}\frac{\beta_t}{\alpha_t^2 + \beta_t^2} - \rho_t \right)\right]\beta_t \\
    		\overset{\mbox{(i)}}{\ge}& \left[1+\mu\left(-1+\frac{2}{\pi}\cdot \frac{50}{29} - \frac{1}{30} \right)\right]\beta_t \ge  \left(1 + \frac{1}{16}\mu\right) \beta_t ,   
    	\end{align*}
    	where (i) holds since \(\abs{\rho_t} \ll \frac{1}{30}\) and  
    	\[\frac{\beta_t}{\alpha_t^2 + \beta_t^2} \ge \frac{0.5}{\delta^2 + 0.5^2} \ge \frac{50}{29}\]
    	with \(0 < \alpha_t \le \delta < 0.2\) and \(\frac 13 \le \beta_t \le \frac 12\).  
    	This justifies that \(\beta_{t+1} \in [\frac 13, \frac 34]\) in this case.  
    \end{itemize}
    Combining the preceding two cases establishes the claim \(\beta_{t+1} \in [\frac 13, \frac 34]\) for all \(T_{1, 1} < t \le T_{1}\).

\item Furthermore, the iterate (\ref{alpha beta approximate:b}) gives \begin{align}\label{beta t+1}
	\beta_{t+1} &\ge  \left[1+\mu\left(-1+\frac{2}{\pi}\frac{\beta_t}{\alpha_t^2 + \beta_t^2} - \abs{\rho_t}\right)\right]\beta_{t} \ge  \left(1-\frac{1}{5}\mu\right) \beta_{t} 
\end{align}
because of \(\abs{\rho_t} \ll \frac{1}{30}\) and 
\[\frac{\beta_t}{\alpha_t^2 + \beta_t^2} \ge \frac{0.75^2}{\delta^2 + 0.75^2} \ge \frac{55}{42}\]
with \(\delta < 0.1\) as well as \(\beta_t \in [\frac 13, \frac 34]\) from (\ref{t:T_1,1-_T_1:2}).  Additionally, it follows from \eqref{omega} that 
\[\frac{\alpha_{t+1}}{\beta_{t+1}} \ge \left(1+\frac{1}{4}\mu\right) \frac{\alpha_{t}}{\beta_{t}},  \]
which combined with \eqref{beta t+1} reveals that 
\[\alpha_{t+1} \ge \left(1+\frac{1}{40}\mu\right)\alpha_t\]
due to \(\mu \le 0.5\).

\item In light of (\ref{t:T_1,1-_T_1:3}) and \eqref{initial}, we have 
\[\xkh{1 + \frac{1}{25} \mu}^{T_{1}} \frac{1}{2\sqrt{n \log n}} \le \delta, \]
which implies that 
\[T_{1} \le \frac{\log \xkh{ 2\sqrt{n \log n} } - \log \delta}{\log\xkh{1 + \frac{1}{15}\mu}} \lesssim \frac{\log n}{\mu}.\]

\item Finally, apply (\ref{alpha beta approximate:a}), (\ref{t:T_1,1-_T_1:2}) and 
\[\arcsin{\frac{\alpha_t}{\sqrt{\alpha_t^2+\beta_t^2}}}\leq\frac{\alpha_t}{\beta_t}\]
to derive that 
\begin{align*}
    \alpha_{T_1+1}&\leq\left[1+\mu\left(-1+\frac{2}{\pi}\frac{1}{\beta_{T_1}} + \abs{\zeta_{T_1}}\right) \right]\alpha_{T_1}+\frac{2}{\pi}\mu\frac{\alpha_{T_1}}{\beta_{T_1}}\leq(1 + 3\mu)\alpha_{T_1}.
\end{align*}
due to \(\beta_{T_1} \ge \frac{1}{3}\) and \(\abs{\zeta_{T_1}} \ll \frac{1}{30}\).   
\end{itemize}

\subsection{Proof of Fact 3} \label{proof:fact 3}
\begin{itemize}
\item First of all, the conclusion (\ref{omega}) and definition (\ref{def:t gamma 2})  give that
\begin{equation}\label{omega:T_gamma,2-T_1}
    \frac{4}{\gamma} \ge \frac{\alpha_{T_{\gamma,2}}}{\beta_{T_{\gamma,2}}} 
    \ge \left(1+\frac{1}{4}\mu\right)^{T_{\gamma,2}-T_1} \frac{\alpha_{T_1}}{\beta_{T_1}} 
    \overset{\mbox{(i)}}{\ge} \left(1+\frac{1}{4}\mu\right)^{T_{\gamma,2}-T_1} \frac{4\delta}{9}
\end{equation}

where (i) utilizes \(\beta_{T_1} \le \frac 34\)  and 
\[\alpha_{T_1} \ge \frac{\alpha_{T_1+1}}{1 + 3\mu} \ge \frac{\delta}{3}\]
from \eqref{t:T_1,1-_T_1:2} and \eqref{t:T_1,1-_T_1:5} with \(\mu \le \frac{1}{2}\).  
Therefore, one has 
\begin{equation*}
    T_{\gamma,2}-T_1 \le \frac{\log \frac{9}{\delta\gamma}}{\log (1 + \frac 14 \mu)} \lesssim \frac{1}{\mu}, 
\end{equation*}
and, as a consequence, \(T_{\gamma,2} \lesssim \frac{\log n}{\mu}\).  

\item To prove (\ref{t:T_1-T_gamma,2:2}), it follows from  (\ref{alpha beta approximate:b}) and (\ref{t:T_1-T_gamma,2:1}) that  
\begin{align*}
    \beta_{t} \overset{\mbox{(i)}}{\ge}&  \left[1 + \mu\left(-1 + \frac{2}{\pi} \cdot \frac{\beta_k}{\alpha_k^2+\beta_k^2} - \abs{\rho_k}\right)\right]\beta_k \\
    \ge& \ (1-1.1\mu)^{t-T_1}\beta_{T_1}\geq(1-1.1\mu)^{T_{\gamma,2}-T_1}\beta_{T_1}\gtrsim 1,  
\end{align*}
where (i) comes from \(\abs{\rho_t} \ll \frac{1}{30}\) and \(\frac{\beta_t}{\alpha_t^2+\beta_t^2} \ge 0\).   

\item For (\ref{t:T_1-T_gamma,2:4}), apply \eqref{omega}, \eqref{result:upper}, \eqref{def:T1}, \eqref{def:t gamma 2}, \eqref{t:T_1-T_gamma,2:1} and \eqref{t:T_1-T_gamma,2:2} to derive that  
\[\alpha_{T_{\gamma,2}}
\ge \left(1+\frac{1}{4}\mu\right)^{T_{\gamma,2}-T_1-1} \frac{\alpha_{T_1+1}}{\beta_{T_1+1}} \beta_{T_{\gamma,2}}  \gtrsim 1.   \]
\end{itemize}

\bibliographystyle{elsarticle-num-names}

\begin{thebibliography}{10}
\providecommand{\url}[1]{#1}
\csname {url@samestyle}\endcsname
\providecommand{\newblock}{\relax}
\providecommand{\bibinfo}[2]{#2}
\providecommand{\BIBentrySTDinterwordspacing}{\spaceskip=0pt\relax}
\providecommand{\BIBentryALTinterwordstretchfactor}{4}
\providecommand{\BIBentryALTinterwordspacing}{\spaceskip=\fontdimen2\font plus
\BIBentryALTinterwordstretchfactor\fontdimen3\font minus \fontdimen4\font\relax}
\providecommand{\BIBforeignlanguage}[2]{{%
\expandafter\ifx\csname l@#1\endcsname\relax
\typeout{** WARNING: IEEEtran.bst: No hyphenation pattern has been}%
\typeout{** loaded for the language `#1'. Using the pattern for}%
\typeout{** the default language instead.}%
\else
\language=\csname l@#1\endcsname
\fi
#2}}
\providecommand{\BIBdecl}{\relax}
\BIBdecl{}

\bibitem{vershynin2020high}
R.~Vershynin, ``High-dimensional probability,'' \emph{University of California, Irvine}, 2020.

\bibitem{harrison1993phase}
R.~W. Harrison, ``Phase problem in crystallography,'' \emph{JOSA a}, vol.~10, no.~5, pp. 1046--1055, 1993.

\bibitem{sayre1952some}
D.~Sayre, ``Some implications of a theorem due to shannon,'' \emph{Acta Crystallographica}, vol.~5, no.~6, pp. 843--843, 1952.

\bibitem{karle1953solution}
J.~Karle and H.~A. Hauptman, \emph{Solution of the Phase Problem: The Centrosymmetric Crystal}, 1953.

\bibitem{fienup1978reconstruction}
J.~R. Fienup, ``Reconstruction of an object from the modulus of its fourier transform,'' \emph{Optics letters}, vol.~3, no.~1, pp. 27--29, 1978.

\bibitem{rosenblatt1984phase}
J.~Rosenblatt, ``Phase retrieval,'' \emph{Communications in mathematical physics}, vol.~95, no.~3, pp. 317--343, 1984.

\bibitem{miao1999extending}
J.~Miao, P.~Charalambous, J.~Kirz, and D.~Sayre, ``Extending the methodology of x-ray crystallography to allow imaging of micrometre-sized non-crystalline specimens,'' \emph{Nature}, vol. 400, no. 6742, pp. 342--344, 1999.

\bibitem{millane1990phase}
R.~P. Millane, ``Phase retrieval in crystallography and optics,'' \emph{JOSA A}, vol.~7, no.~3, pp. 394--411, 1990.

\bibitem{holloway2016toward}
J.~Holloway, M.~S. Asif, M.~K. Sharma, N.~Matsuda, R.~Horstmeyer, O.~Cossairt, and A.~Veeraraghavan, ``Toward long-distance subdiffraction imaging using coherent camera arrays,'' \emph{IEEE Transactions on Computational Imaging}, vol.~2, no.~3, pp. 251--265, 2016.

\bibitem{candes2015phase}
E.~J. Candes, Y.~C. Eldar, T.~Strohmer, and V.~Voroninski, ``Phase retrieval via matrix completion,'' \emph{SIAM review}, vol.~57, no.~2, pp. 225--251, 2015.

\bibitem{fienup1987phase}
C.~Fienup and J.~Dainty, ``Phase retrieval and image reconstruction for astronomy,'' \emph{Image recovery: theory and application}, vol. 231, p. 275, 1987.

\bibitem{chen2020dual}
H.~Chen, N.~K. Fontaine, J.~M. Gene, R.~Ryf, D.~T. Neilson, and G.~Raybon, ``Dual polarization full-field signal waveform reconstruction using intensity only measurements for coherent communications,'' \emph{Journal of Lightwave Technology}, vol.~38, no.~9, pp. 2587--2597, 2020.

\bibitem{corbett2006pauli}
J.~V. Corbett, ``The pauli problem, state reconstruction and quantum-real numbers,'' \emph{Reports on Mathematical Physics}, vol.~57, no.~1, pp. 53--68, 2006.

\bibitem{chen2015solving}
Y.~Chen and E.~Candes, ``Solving random quadratic systems of equations is nearly as easy as solving linear systems,'' \emph{Advances in Neural Information Processing Systems}, vol.~28, 2015.

\bibitem{zhang2016reshaped}
H.~Zhang and Y.~Liang, ``Reshaped wirtinger flow for solving quadratic system of equations,'' \emph{Advances in Neural Information Processing Systems}, vol.~29, 2016.

\bibitem{netrapalli2013phase}
P.~Netrapalli, P.~Jain, and S.~Sanghavi, ``Phase retrieval using alternating minimization,'' \emph{Advances in Neural Information Processing Systems}, vol.~26, 2013.

\bibitem{zhang2016provable}
H.~Zhang, Y.~Chi, and Y.~Liang, ``Provable non-convex phase retrieval with outliers: Median truncatedwirtinger flow,'' in \emph{International conference on machine learning}.\hskip 1em plus 0.5em minus 0.4em\relax PMLR, 2016, pp. 1022--1031.

\bibitem{cai2016optimal}
T.~T. Cai, X.~Li, and Z.~Ma, ``Optimal rates of convergence for noisy sparse phase retrieval via thresholded wirtinger flow,'' \emph{The Annals of Statistics}, 2016.

\bibitem{sun2018geometric}
J.~Sun, Q.~Qu, and J.~Wright, ``A geometric analysis of phase retrieval,'' \emph{Foundations of Computational Mathematics}, vol.~18, pp. 1131--1198, 2018.

\bibitem{wei2015solving}
K.~Wei, ``Solving systems of phaseless equations via kaczmarz methods: A proof of concept study,'' \emph{Inverse Problems}, vol.~31, no.~12, p. 125008, 2015.

\bibitem{gerchberg1994practical}
R.~Gerchberg and W.~Saxton, ``A practical algorithm for the determination of phase from image and diffraction plane pictures,'' \emph{SPIE milestone series MS}, vol.~94, pp. 646--646, 1994.

\bibitem{fienup1982phase}
J.~R. Fienup, ``Phase retrieval algorithms: a comparison,'' \emph{Applied optics}, vol.~21, no.~15, pp. 2758--2769, 1982.

\bibitem{bauschke2003hybrid}
H.~H. Bauschke, P.~L. Combettes, and D.~R. Luke, ``Hybrid projection--reflection method for phase retrieval,'' \emph{JOSA A}, vol.~20, no.~6, pp. 1025--1034, 2003.

\bibitem{luke2004relaxed}
D.~R. Luke, ``Relaxed averaged alternating reflections for diffraction imaging,'' \emph{Inverse problems}, vol.~21, no.~1, p.~37, 2004.

\bibitem{candes2013phaselift}
E.~J. Candes, T.~Strohmer, and V.~Voroninski, ``Phaselift: Exact and stable signal recovery from magnitude measurements via convex programming,'' \emph{Communications on Pure and Applied Mathematics}, vol.~66, no.~8, pp. 1241--1274, 2013.

\bibitem{waldspurger2015phase}
I.~Waldspurger, A.~d’Aspremont, and S.~Mallat, ``Phase recovery, maxcut and complex semidefinite programming,'' \emph{Mathematical Programming}, vol. 149, pp. 47--81, 2015.

\bibitem{kruger2003frechet}
A.~Y. Kruger, ``On fr{\'e}chet subdifferentials,'' \emph{Journal of Mathematical Sciences}, vol. 116, no.~3, pp. 3325--3358, 2003.

\bibitem{chen2019gradient}
Y.~Chen, Y.~Chi, J.~Fan, and C.~Ma, ``Gradient descent with random initialization: Fast global convergence for nonconvex phase retrieval,'' \emph{Mathematical Programming}, vol. 176, pp. 5--37, 2019.

\end{thebibliography}
\label{sec:Appendix}

\end{document}